\newcommand{\R}{\mathbb{R}}
\newcommand{\Z}{\mathbb{Z}}
\newcommand{\N}{\mathbb{N}}
\newcommand{\half}{\tfrac{1}{2}}
\newcommand{\im}{\mathrm{Im}}
\newcommand{\del}{\partial}
\newcommand{\bk}{\mathbf{k}}
\DeclareMathOperator{\der}{d}
\newcommand{\D}{\mathrm{d}}
\newtheorem{theorem}{Theorem}[section]
\newtheorem{corollary}[theorem]{Corollary}
\newtheorem{lemma}[theorem]{Lemma}
\newtheorem{proposition}[theorem]{Proposition}
\theoremstyle{definition}
\newtheorem{definition}[theorem]{Definition}
\theoremstyle{plain}
\numberwithin{equation}{section}
\begin{document}
\title{Formal conserved quantities for isothermic surfaces}
\author{F.E. Burstall}
\address{Department of Mathematical Sciences\\ University of Bath\\
Bath BA2 7AY\\UK} \email{feb@maths.bath.ac.uk}
\author{S.D. Santos}
\address{Universidade de Lisboa\\ Faculdade de Ci\^encias\\ Departamento
de Matem\'atica\\ CMAF\\1749-016 Lisboa\\Portugal}
\email{susantos@ptmat.fc.ul.pt}
\begin{abstract}
Isothermic surfaces in $S^n$ are characterised by the existence of a
pencil $\nabla^t$ of flat connections.  Such a surface is special of
type $d$ if there is a family $p(t)$ of $\nabla^t$-parallel sections
whose dependence on the spectral parameter $t$ is polynomial of
degree $d$.  We prove that any isothermic surface admits a family of
$\nabla^t$-parallel sections which is a formal Laurent series in
$t$.  As an application, we give conformally invariant
conditions for an isothermic surface in $S^3$ to be special.
\end{abstract}

\maketitle

\textbf{Keywords:} special isothermic surfaces, polynomial and formal conserved quantities.

\medskip

\textbf{MSC:} 53A30, 53A05.

\section*{Introduction}

Isothermic surfaces, that is, surfaces which admit conformal
curvature line coordinates, were intensively studied around the turn
of the 20th century by Darboux, Bianchi and others
\cite{Dar99b,Bia05,Cal03,Cal15}.  These classical works revealed a
rich transformation theory that has been revisited in modern times
from the viewpoint of integrable systems
\cite{Bur06,CieGolSym95,Sch01}.  At the heart of the integrable
systems formalism is the observation that there is a pencil of flat
connections $\nabla^{t}=\D+t\eta$, $t\in\R$, associated to each
isothermic surface.

In our previous work \cite{BurSan12}, we distinguished the class of
\emph{special isothermic surfaces} (of type $d$) which are
characterised by the existence of a \emph{polynomial conserved
quantity}, that is, a family $p(t)$ of $\nabla^{t}$-parallel sections
whose dependence on $t$ is polynomial (of degree $d$).  The existence
of such a polynomial conserved quantity amounts to a differential
equation on the principal curvatures of the surface.  For example, an
isothermic surface in $S^3$ is special of type $1$ if it has
constant mean curvature with respect to a constant curvature metric
on (an open subset of) $S^3$ and special of type $2$ if it is a
special isothermic surface in the sense of Darboux and Bianchi
\cite{Bia05,Dar99}---a class of isothermic surfaces that originally
arose in the study of surfaces isometric to a quadric.

The purpose of the present paper is to answer a question posed to one
of us by Nigel Hitchin: does any isothermic surface in $S^{n}$ admit a
\emph{formal conserved quantity}, thus a solution $p(t)$ of $\nabla^tp(t)=0$
with $p(t)=\sum_{i\leq 0} p_{i}t^{i}$ a formal Laurent series?  We
give an affirmative answer locally, away from the (discrete) zero-set
of $\eta$, and globally when $n=3$.  In particular, any isothermic
$2$-torus in $S^3$ admits a formal conserved quantity.  This is an
analogue of the existence of formal Killing fields for harmonic maps
\cite{BurFerPedPin93} although the method is somewhat different since
here we deal with nilpotent rather than semisimple gauge potentials.

Since polynomial conserved quantities are also formal conserved
quantities, our arguments allow us to give conformally invariant
conditions, in terms of the Schwarzian derivative and Hopf
differential introduced in \cite{BurPedPin02}, for an isothermic
surface to be special of type $d$.  We illustrate these results with
the case of surfaces of revolution and other equivariant surfaces
where these conditions amount to a differential equation on the
curvature of a profile curve.

Some of the following results can also be found in the second author's doctoral thesis \cite{San08}, using a different approach.

\section{Preliminaries}

\subsection{The conformal sphere}
\label{sec:conformal-sphere}

We will study isothermic surfaces in the $n$-sphere from a
conformally invariant view-point and so use Darboux's light-cone
model of the conformal $n$-sphere.  For this, contemplate the light-cone
$\mathcal{L}$ in the Lorentzian vector space $\mathbb{R}^{n+1,1}$ and
its projectivisation $\mathbb{P}(\mathcal{L})$.  This last has a
conformal structure where representative metrics $g_{\sigma}$ arise from
never-zero sections $\sigma$ of the tautological bundle
$\pi:\mathcal{L}\to\mathbb{P}(\mathcal{L})$ via
\begin{equation*}
g_{\sigma}(X,Y)=(\D\sigma(X),\D\sigma(Y)).
\end{equation*}
Then $S^n\cong \mathbb{P}(\mathcal{L})$ \emph{qua} conformal
manifolds.  Indeed, for non-zero $w\in\mathbb{R}^{n+1,1}$, let $E(w)$
be the conic section given by
\begin{equation*}
E(w)=\{v\in\mathcal{L}:(v,w)=-1\}
\end{equation*}
with (definite) metric induced by the ambient inner product on
$\mathbb{R}^{n+1,1}$.  Then $\pi_{|E(w)}$ is a conformal
diffeomorphism onto its image.  In particular, when $w_{0}$ is unit
time-like, we have an isometry $x\mapsto x+w_{0}$ from the unit sphere in
$\langle w_{0}\rangle^{\perp}$ to $E(w_0)$ and thus a conformal
diffeomorphism from that sphere to $\mathbb{P}(\mathcal{L})$.  More
generally, $E(w)$ has constant sectional curvature $-(w,w)$.

\subsection{Invariants of a conformal immersion}
\label{sec:invar-conf-immers}
Let $\Sigma$ be a Riemann surface and $\Lambda:\Sigma\to
S^{n}\cong\mathbb{P}(\mathcal{L})$ a conformal immersion. We view
$\Lambda$ as a null line subbundle of the trivial bundle
$\underline{\mathbb{R}}^{n+1,1}=\Sigma\times \mathbb{R}^{n+1,1}$.

The \emph{central sphere congruence} assigns, to each $x\in \Sigma$,
the unique $2$-sphere $S(x)$ tangent to $\Lambda$ at the point
$\Lambda(x)$, which has the same mean curvature vector as $\Lambda$
at $x$.  Having in mind the identification between $2$-dimensional subspheres
of $S^{n}$ and $(3,1)$-planes of $\mathbb{R}^{n+1,1}$ via
\begin{equation*}
V\mapsto\mathbb{P}(\mathcal{L}\cap V),
\end{equation*}
the central sphere congruence of $\Lambda$ amounts to a subbundle $V$
of $\underline{\mathbb{R}}^{n+1,1}$, with signature $(3,1)$.

Fix a holomorphic coordinate $z=u+iv$ on $\Sigma$ and take the unique (up to sign)
lift $\psi\in\Gamma\Lambda$ such that $$|\D \psi|^{2}=|\D z|^{2}.$$  Then
\[V\otimes\mathbb{C}=\langle
\psi,\psi_{z},\psi_{\bar{z}},\psi_{z\bar{z}}\rangle.\]
Consider now the unique section $\hat{\psi}\in\Gamma(V)$ such that $$(\hat{\psi},\hat{\psi})=0,\;(\hat{\psi},\psi)=-1 \mbox{ and }\;(\hat{\psi},\D\psi)=0,$$
which provides a new frame for $V\otimes\mathbb{C}$, namely
$$\psi,\psi_{z},\psi_{\bar{z}}\mbox{ and }\hat{\psi}.$$

According to \cite{BurPedPin02}, we have
$$\psi_{zz}+\frac{c}{2}\psi=\kappa,$$
for a complex function $c$ and
$\kappa\in\Gamma(V^{\perp}\otimes\mathbb{C})$.  These latter
invariants are, respectively, the \emph{Schwarzian derivative} and
\emph{Hopf differential} of $\Lambda$ with respect to $z$ and,
together with the connection $D$ on $V^{\perp}$ given by
orthoprojection of flat differentiation, determine $\Lambda$ up to
conformal diffeomorphisms of $S^n$.

Our frame satisfies:
\begin{equation}
\label{eq:6}
\begin{split}
\psi_{zz}&=-\frac{c}{2}\psi+\kappa\\
\psi_{z\bar{z}}&=-(\kappa,\bar{\kappa})\psi+\frac{1}{2}\hat{\psi}\\
\hat{\psi}_{z}&=-2(\kappa,\bar{\kappa})\psi_{z}-c\psi_{\bar{z}}+2D_{\bar{z}}\kappa\\
\xi_{z}&=2(\xi,D_{\bar{z}}\kappa)\psi-2(\xi,\kappa)\psi_{\bar{z}}+D_{z}\xi,
\end{split}
\end{equation}
for each $\xi\in\Gamma(V^{\perp}\otimes\mathbb{C})$.  The
corresponding structure equations are the
\emph{conformal Gauss equation}:
\begin{equation*}
\frac{1}{2}c_{\bar{z}}=3(\kappa,D_{z}\bar{\kappa})+(\bar{\kappa},D_{z}\kappa);
\end{equation*}
the \emph{conformal Codazzi equation}:
$$\mathrm{Im}(D_{\bar{z}}D_{\bar{z}}\kappa+\frac{1}{2}\bar{c}\kappa)=0$$
and the \emph{conformal Ricci equation}:
$$D_{\bar{z}}D_{z}\xi-D_{z}D_{\bar{z}}\xi
-2\langle \xi,\kappa\rangle \bar{\kappa}+2\langle \xi,\bar{\kappa}\rangle \kappa=0.$$

See \cite{BurPedPin02} for more details.

\subsection{Isothermic and special isothermic surfaces}
\label{sec:isoth-spec-isoth}
Classically, an isothermic surface is a surface in $S^n$ that admits
conformal curvature line coordinates but we shall follow
\cite{BurDonPedPin11,Her03} and adopt the following conformally
invariant formulation:
\begin{definition}
An immersion $\Lambda:\Sigma\longrightarrow
S^{n}\cong\mathbb{P}(\mathcal{L})$, $\Lambda$ is an \emph{isothermic
surface} if there is a non-zero closed $1$-form
$\eta\in\Omega^{1}\otimes o(\mathbb{R}^{n+1,1})$ taking values in
$\Lambda\wedge\Lambda^{\perp}$ \footnote{Recall the isomorphism
$\bigwedge^{2}\mathbb{R}^{n+1,1}\cong o(\mathbb{R}^{n+1,1})$ via
$(u\wedge v)w=(u,w)v-(v,w)u$, for all
$u,v,w\in\mathbb{R}^{n+1,1}$.}.
\end{definition}
One makes contact with the classical formulation by defining
$q\in\Gamma(S^2T^{*}\Sigma)$ by
\begin{equation*}
q(X,Y)\sigma=\eta_X\D_Y\sigma,
\end{equation*}
for any $\sigma\in\Gamma\Lambda$.  Then $d\eta=0$ if and only if $q$
is a holomorphic quadratic differential which commutes with the
second fundamental form of $\Lambda$.  Now $q$ and hence $\eta$
vanishes only on a discrete set and, off that set, we can find a
holomorphic coordinate $z$ such that $q=\D z^2$.  In terms of the
corresponding lift $\psi$ of Section~\ref{sec:invar-conf-immers}, we
then have
\begin{equation*}
\eta=-\psi\wedge(\psi_{\bar{z}}\D z+\psi_{z}\D \bar{z})
\end{equation*}
which commutes with the second fundamental form if and only if
$\kappa$ is real.  In this case, $z=u+iv$ where $u,v$ are curvature
line coordinates.

The conformal Gauss and Codazzi equations are now given simply by
\begin{equation*}
c_{\bar{z}}=4(\kappa,\kappa)_{z}
\mbox{ and }
\mathrm{Im}(D_{\bar{z}}D_{\bar{z}}\kappa+\frac{1}{2}\bar{c}\kappa)=0,
\end{equation*}
while the conformal Ricci equation amounts to the familiar assertion
that the connection $D$ on $V^{\perp}$ is flat.

The key to the integrable systems theory of isothermic surfaces is
the observation that the family of metric connections $\D+t\eta$, $t\in\R$ on
$\underline{\R}^{n+1,1}$ are \emph{flat} and so have a good supply of
parallel sections.  In \cite{BurSan12}, we considered isothermic
surfaces which admitted parallel sections with polynomial dependence
on $t$ and so introduced the notion of special isothermic
surfaces of type $d\in\mathbb{N}_{0}$:
\begin{definition}
An isothermic surface $(\Lambda,\eta)$ in $S^{n}$ is a \emph{special
isothermic surface of type $d\in\mathbb{N}_{0}$} if there is a
polynomial $p(t)=\sum_{i=0}^d p_{i}t^{i}\in\Gamma(\underline{\mathbb{R}}^{n+1,1})[t]$ of
degree $d$ such that $(\D+t\eta)p(t)\equiv 0$.  We call such a $p(t)$
a \emph{polynomial conserved quantity} of $(\Lambda,\eta)$.
\end{definition}

An immediate consequence of $(\D+t\eta)p(t)\equiv 0$ is that $\D
p_0=0$ so that $p_0$ is constant and therefore, if non-zero, defines
a conic section as in Section~\ref{sec:conformal-sphere}.  We
therefore refine our definition:
\begin{definition}
An isothermic surface $(\Lambda,\eta)$ in $S^{n}$ is a
\emph{special isothermic surface of type $d\in\mathbb{N}_{0}$ in
$E(w)$} if $(\Lambda,\eta)$ admits a polynomial conserved quantity
$p(t)=\sum_{i=0}^d p_{i}t^{i}$ of degree $d$ with $p_{0}\in\langle
w\rangle$.
\end{definition}

The condition that an isothermic surface $(\Lambda,\eta)$ be special
of type $d$ amounts to a differential equation on the principal
curvatures of $\Lambda$.  For example, generically, $(\Lambda,\eta)$ is a special
isothermic surface of type $1$ in $E(w)$ if and only if the lift
$F:\Sigma\longrightarrow E(w)$ of $\Lambda$ is a generalised
$H$-surface (which, in codimension $1$, amounts to the mean curvature
$H$ being constant) (\cite{BurCal}, see also \cite{BurSan12}).
Again, in \cite{BurSan12}, we show that $(\Lambda,\eta)$ is special
of type $2$ in $E(w)$ if and only if there are real constants $A$,
$B$ and $C$ such that
\begin{equation}\label{new Bianchi condition}
\begin{cases}
H_{uu}+\theta_{u}H_{u}-\theta_{v}H_{v}-\frac{1}{2}Mk_{1}-Ak_{1}-Be^{-2\theta}+C-\frac{1}{2}L(w,w)=0\\
H_{vv}-\theta_{u}H_{u}+\theta_{v}H_{v}+\frac{1}{2}Mk_{2}+Ak_{2}-Be^{-2\theta}-C+\frac{1}{2}L(w,w)=0,
\end{cases}
\end{equation}
where $z=u+iv$ is a holomorphic coordinate for which
$\eta=-\psi\wedge(\psi_{\bar{z}}\D z+\psi_{z}\D \bar{z})$,
\begin{equation*}
I=e^{2\theta}(\D u^{2}+\D v^{2})\mbox{ and }
I\!I=e^{2\theta}(k_{1}\D u^{2}+k_{2}\D v^{2})
\end{equation*}
are, respectively, the first and second fundamental forms of the lift
$F:\Sigma\longrightarrow E(w)$ of $\Lambda$,
$H=\frac{k_{1}+k_{2}}{2}$ is the mean curvature of $F$,
$L=e^{2\theta}(k_{1}-k_{2})$ and $M=-HL$.  This condition amounts to
the surface being a special isothermic surface in the sense of
Darboux and Bianchi \cite{Bia05, Dar99b}, at least when $H_uH_v$ is
non-zero (see \cite{BurSan12}).

\section{Formal conserved quantities}\label{sec:fcq}

Denote by $\Gamma(\underline{\mathbb{R}}^{n+1,1})[[t,t^{-1}]]$ the
vector space of the formal Laurent series in $t$ with coefficients
in $\Gamma(\underline{\mathbb{R}}^{n+1,1})$, i.e., the series
$\sum_{k\leq s} p_{k}t^{k}$, for some $s\in\mathbb{N}_{0}$, with
coefficients in $\Gamma(\underline{\mathbb{R}}^{n+1,1})$.

We define a $\R[[t,t^{-1}]]$ valued inner product on
$\Gamma(\underline{\mathbb{R}}^{n+1,1})[[t,t^{-1}]]$ by
\begin{equation*}
\bigl(p(t),q(t)\bigr):=\sum_{k\leq s+r}\sum_{\substack{i\leq s,j\leq
r\\i+j=k}}(p_{i},q_{j})t^{k},
\end{equation*}
for all $p(t)=\sum_{k\leq s} p_{k}t^{k},q(t)=\sum_{k\leq r}
q_{k}t^{k}\in\Gamma(\underline{\mathbb{R}}^{n+1,1})[[t,t^{-1}]]$.

\begin{definition}Let $(\Lambda,\eta)$  be an isothermic surface in $S^{n}$ and let
$p(t)=\sum_{i\leq 0}
p_{i}t^{i}\in\Gamma(\underline{\mathbb{R}}^{n+1,1})[[t,t^{-1}]]$
such that $p_{0}\neq0$. We say that $p(t)$ is a \emph{formal
conserved quantity} of $(\Lambda,\eta)$ if $(\D+t\eta)p(t)$ is the
zero series.
\end{definition}

\begin{proposition}\label{top term}
Let $(\Lambda,\eta)$ be an isothermic surface in $S^{n}$. If
$p(t)=\sum_{i\leq 0}
p_{i}t^{i}$ is a formal conserved quantity of $(\Lambda,\eta)$, then
\begin{enumerate}
\item\label{item} $p_{0}$ is a $D$-parallel section of $V^{\perp}$;
\smallskip
\item the series $\bigl(p(t),p(t)\bigr)\in\mathbb{R}[[t,t^{-1}]]$ (thus
independent of $x\in\Sigma$).
\end{enumerate}
\end{proposition}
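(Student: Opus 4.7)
The plan is to extract information from the top two powers of $t$ in the equation $(\D+t\eta)p(t)=0$: the $t^{1}$-coefficient reads $\eta p_{0}=0$, and the $t^{0}$-coefficient reads $\D p_{0}+\eta p_{-1}=0$. For part~(1), I would first analyze $\eta p_{0}=0$ using the explicit form $\eta=-\psi\wedge(\psi_{\bar{z}}\D z+\psi_{z}\D\bar{z})$. Writing $\eta_{X}=\psi\wedge\zeta_{X}$, one sees that the map $X\mapsto\zeta_{X}$ sweeps out the real tangent directions $\psi_{u},\psi_{v}\in V$, and the identity $(\psi\wedge\zeta_{X})p_{0}=(\psi,p_{0})\zeta_{X}-(\zeta_{X},p_{0})\psi$ together with the linear independence of $\psi,\psi_{u},\psi_{v}$ forces $p_{0}\perp\psi,\psi_{u},\psi_{v}$; hence $p_{0}=f\psi+\xi$ for some function $f$ and some $\xi\in\Gamma(V^{\perp})$.

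Next I would plug this decomposition into $\D p_{0}+\eta p_{-1}=0$. Because $\eta_{X}$ maps any vector into $\langle\psi,\zeta_{X}\rangle\subset V$, the term $\eta p_{-1}$ is $V$-valued; projecting onto $V^{\perp}$ and using that $D$ is the orthoprojection of $\D$ gives $D\xi=0$. To eliminate $f$, I would examine the $\langle\psi_{z}\rangle\otimes\D z$-slot of the same equation: by the structure equations~\eqref{eq:6}, $\D\xi$ contributes to the $\psi_{z}$-direction only through $\D\bar{z}$ (via the $-2(\xi,\bar{\kappa})\psi_{z}$ term in $\xi_{\bar{z}}$), $\eta p_{-1}$ contributes no $\psi_{z}$-term in the $\D z$-direction at all, and the only surviving contribution in this slot is $f\D\psi=f\psi_{z}\D z+f\psi_{\bar{z}}\D\bar{z}$; hence $f=0$, so $p_{0}=\xi\in\Gamma(V^{\perp})$ with $Dp_{0}=0$.

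For part~(2), the argument is immediate from the skew-symmetry of $\eta_{X}\in o(\R^{n+1,1})$: coefficient by coefficient in $t$,
\begin{equation*}
\D\bigl(p(t),p(t)\bigr)=2\bigl(\D p(t),p(t)\bigr)=-2t\bigl(\eta p(t),p(t)\bigr)=0,
\end{equation*}
since $(\eta_{X}w,w)=0$ for every $w\in\R^{n+1,1}$. Hence every coefficient of the Laurent series $(p(t),p(t))$ is locally constant on $\Sigma$.

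The main obstacle I anticipate is the $f=0$ step of~(1): one must find a component of $\D p_{0}+\eta p_{-1}=0$ in which $f$ appears without cancellation, and singling out the $\langle\psi_{z}\rangle\otimes\D z$-slot relies on the specific asymmetry of $\D\xi$ in the $\psi_{z}$-direction read off from~\eqref{eq:6}. Everything else, including part~(2), is a short computation once the equation is expanded coefficient by coefficient.
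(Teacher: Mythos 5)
Your proof is correct, and it is worth noting that it does strictly more than the paper does: for item~(2) the paper's argument is the same as yours (it writes $\der\bigl(p(t),p(t)\bigr)=\bigl((\D+t\eta)p(t),p(t)\bigr)+\bigl(p(t),(\D+t\eta)p(t)\bigr)=0$ using that $\D+t\eta$ is metric, which is your skew-symmetry computation in different clothing), whereas for item~(1) the paper offers no proof at all, only the citation \cite[Proposition~2.2]{BurSan12}. Your direct derivation of (1) is in effect the $i=0$ instance of the frame computation the paper performs later in Proposition~\ref{necessary}: the vanishing of the $\hat{\psi}$-, $\psi_z$- and $\psi_{\bar{z}}$-coefficients there, specialised to $\gamma_0=0$ and $\beta_0=0$, is exactly your chain $p_0\perp\psi,\psi_z,\psi_{\bar{z}}$ followed by $f=0$, and the vanishing of the normal component is your $D\xi=0$; all the individual steps (the identification $\langle\psi,\psi_z,\psi_{\bar{z}}\rangle^{\perp}=\langle\psi\rangle\oplus V^{\perp}$, the fact that $\eta p_{-1}$ is $V$-valued, and the absence of a $\psi_z$-term in the $\D z$-slot of $\D\xi$ and of $\eta_{\del/\del z}p_{-1}$) check out against \eqref{eq:6}. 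Two small points to tidy up. First, your argument for (1) uses a holomorphic coordinate with $\eta=-\psi\wedge(\psi_{\bar{z}}\D z+\psi_{z}\D\bar{z})$, which exists only away from the discrete zero set of $\eta$; since $p_0\in\Gamma(V^{\perp})$ and $Dp_0=0$ are closed conditions, one sentence invoking continuity extends the conclusion across that set. Second, in (2) the vanishing of $\bigl(\eta p(t),p(t)\bigr)$ for a formal series is the coefficientwise statement $\sum_{i+j=k}(\eta p_i,p_j)=0$, which does follow from skew-symmetry by symmetrising the sum in $i$ and $j$, but is a half-step beyond the pointwise identity $(\eta_X w,w)=0$ that you quote.
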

\begin{proof}
Item (\ref{item}) is proved in \cite[Proposition~2.2]{BurSan12}.

That the coefficients of $\bigl(p(t),p(t)\bigr)$ are constant follows from the
fact that $\D+t\eta$ is a metric connection:
\begin{equation*}
\begin{split}
\der\bigl(p(t),p(t)\bigr)&=\sum_{k\leq 0}\sum_{\substack{i,j\leq
0\\i+j=k}}\der(p_{i},p_{j})t^{k}\\
&=\sum_{k\leq 0}\sum_{\substack{i,j\leq
0\\i+j=k}}\big(((\der+t\eta)p_{i},p_{j})+(p_{i},(\der+t\eta)p_{j})\big)t^{k}\\
&=((\D+t\eta)p(t),p(t)\bigr)+\bigl(p(t),(\D+t\eta)p(t)\bigr)=0.
\end{split}
\end{equation*}
\end{proof}

The condition that $p(t)$ be a formal conserved quantity amounts to a
recursive system of equations on its coefficients which we now
describe in terms of the frame
$\psi,\psi_{z},\psi_{\bar{z}},\hat{\psi}$ of $V$ associated with a
holomorphic coordinate $z$ as in Section~\ref{sec:isoth-spec-isoth}.
\begin{proposition}\label{necessary}
Let $(\Lambda,\eta)$ be an isothermic surface in $S^{n}$ and let
$z=u+iv$ be a holomorphic coordinate on $\Sigma$ such that
$\eta=-\psi\wedge(\psi_{\bar{z}}\D z+\psi_{z}\D \bar{z})$.

Let $p(t)=\sum_{i\leq 0}
p_{i}t^{i}\in\Gamma(\underline{\mathbb{R}}^{n+1,1})[[t,t^{-1}]]$ and,
for $i\in\mathbb{Z}^{-}_0$, write
\begin{equation*}
p_{i}=\alpha_{i} \psi+\beta_{i}\psi_{z}+\bar{\beta_{i}}\psi_{\bar{z}}+\gamma_{i}\hat{\psi}+q_{i},
\end{equation*}
where each $\alpha_i,\gamma_i$ is a real function, $\beta_i$ is a
complex function and $q_{i}\in\Gamma(V^{\perp})$.  Then $p(t)$ is a
formal conserved quantity if and only if, for all $i\in\mathbb{Z}^{-}_0$,
\begin{subequations}\label{eq:1}
\begin{align}
    \label{eq:2}\beta_{i}&=-2\gamma_{i,\bar{z}}\\
    \label{eq:3}\alpha_{i}&=2\gamma_{i,z\bar{z}}+2(\kappa,\kappa)\gamma_{i}\\
    \label{eq:4}D_{z}q_{i}&=2\gamma_{i,\bar{z}}\kappa-2\gamma_{i}D_{\bar{z}}\kappa\\
    \label{eq:5}\gamma_{i-1}&=2\gamma_{i,zz}+c\gamma_{i}+2(q_{i},\kappa).
\end{align}
\end{subequations}
\end{proposition}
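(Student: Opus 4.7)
The plan is to equate coefficients of $t^i$ in $(\D+t\eta)p(t)=0$, which gives the recursion $\D p_i=-\eta\, p_{i-1}$ for each $i\leq 0$ (with the convention $p_{1}:=0$). Since each $p_i$ is a real section and $\eta=-\psi\wedge(\psi_{\bar{z}}\D z+\psi_{z}\D\bar z)$, the $\D\bar z$-component of this equation is the complex conjugate of the $\D z$-component, so it suffices to analyse
\begin{equation*}
(p_i)_{z}=(\psi\wedge\psi_{\bar z})\,p_{i-1}.
\end{equation*}
Using $(u\wedge v)w=(u,w)v-(v,w)u$ together with the only non-vanishing frame pairings $(\psi,\hat\psi)=-1$ and $(\psi_z,\psi_{\bar z})=\tfrac12$, the right-hand side evaluates immediately to $-\gamma_{i-1}\psi_{\bar z}-\tfrac12\beta_{i-1}\psi$.

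The next step is to expand $(p_i)_{z}$ by differentiating each summand in $p_i=\alpha_i\psi+\beta_i\psi_z+\bar\beta_i\psi_{\bar z}+\gamma_i\hat\psi+q_i$ using the structure equations stated in the excerpt, remembering that in the isothermic setting $\kappa$ is real so $(\kappa,\bar\kappa)=(\kappa,\kappa)$. Regrouping the result along the basis $\psi,\psi_z,\psi_{\bar z},\hat\psi$ and the $V^{\perp}$-summand and matching against $-\gamma_{i-1}\psi_{\bar z}-\tfrac12\beta_{i-1}\psi$, the $\hat\psi$-, $\psi_z$-, $V^{\perp}$- and $\psi_{\bar z}$-components give exactly the four relations \eqref{eq:2}, \eqref{eq:3}, \eqref{eq:4} and \eqref{eq:5}. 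This already establishes the ``only if'' direction.

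For the converse, one further identity appears, namely the vanishing of the $\psi$-coefficient:
\begin{equation*}
\alpha_{i,z}-\tfrac{c}{2}\beta_i-(\kappa,\kappa)\bar\beta_i+2(q_i,D_{\bar z}\kappa)+\tfrac12\beta_{i-1}=0.
\end{equation*}
The main obstacle is showing that this additional equation is automatic once \eqref{eq:2}--\eqref{eq:5} hold. I would handle it by substituting $\beta_i=-2\gamma_{i,\bar z}$, $\bar\beta_i=-2\gamma_{i,z}$ from \eqref{eq:2}, writing $\alpha_{i,z}$ as the $z$-derivative of \eqref{eq:3} and $\tfrac12\beta_{i-1}=-\gamma_{i-1,\bar z}$ as the $\bar z$-derivative of \eqref{eq:5}; the $V^{\perp}$-inner products involving $q_i$ are reduced using \eqref{eq:4} together with its complex conjugate $D_{\bar z}q_i=2\gamma_{i,z}\kappa-2\gamma_i D_z\kappa$. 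After the resulting cancellations the remaining terms collapse into a multiple of $\gamma_i\bigl(c_{\bar z}-4(\kappa,\kappa)_z\bigr)$, which vanishes by the conformal Gauss equation. This closes the ``if'' direction and completes the proof.
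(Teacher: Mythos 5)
Your proposal is correct and follows essentially the same route as the paper: expand $\D p_i+\eta_{\partial/\partial z}p_{i-1}=0$ in the frame $\psi,\psi_z,\psi_{\bar z},\hat\psi$ plus the $V^{\perp}$-part, read off \eqref{eq:2}--\eqref{eq:5} from four of the components, and show the residual $\psi$-coefficient is a differential consequence of these via \eqref{eq:4}, its conjugate, and the conformal Gauss equation $c_{\bar z}=4(\kappa,\kappa)_z$. The paper's verification of the $\psi$-component is organised as a computation of $\beta_{i-1}=-2\gamma_{i-1,\bar z}$ from \eqref{eq:5}, but this is the same cancellation you describe.
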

\begin{proof}
For each $i\in\Z^{-}_{0}$, we have, using \eqref{eq:6},
\begin{equation*}
\begin{split}
p_{i,z}+\eta_{\frac{\partial}{\partial z}}p_{i-1}&=\big(\alpha_{i,z}-\frac{c\beta_{i}}{2}-\bar{\beta_{i}}(\kappa,\kappa)+2(q_{i},\kappa_{\bar{z}})+\frac{\beta_{i-1}}{2}\big)\psi\\
&+\big(\alpha_{i}+\beta_{i,z}-2(\kappa,\kappa)\gamma_{i}\big)\psi_{z}
+\big(\bar{\beta_{i}}_{,z}-c\gamma_{i}-2(q_{i},\kappa)+\gamma_{i-1}\big)\psi_{\bar{z}}\\
&+\big(\frac{\bar{\beta_{i}}}{2}+\gamma_{i,z}\big)\hat{\psi}\\
&+\big(\beta_{i}\kappa+2\gamma_{i} D_{\bar{z}}\kappa+D_{z}q_{i}\big).
\end{split}
\end{equation*}
The vanishing of the $\hat{\psi}$ coefficient is equivalent to
$\beta_{i}=-2\gamma_{i,\bar{z}}$ since $\gamma_{i}$ is real and then
\eqref{eq:3} and \eqref{eq:5} amount to the vanishing
of the coefficients of $\psi_z$, $\psi_{\bar{z}}$ respectively while
\eqref{eq:4} is the same as the vanishing of the normal component.
We are left with the $\psi$ component but its vanishing is a
differential consequence of \eqref{eq:1}.  Indeed:
\begin{equation*}
\begin{split}
\beta_{i-1}&=-2\gamma_{i-1,\bar{z}}=-2(2\gamma_{i,zz\bar{z}}+c_{\bar{z}}\gamma_{i}+c\gamma_{i,\bar{z}}+2(q_{i,\bar{z}},\kappa)+2(q_{i},\kappa_{\bar{z}}))\\
&=-2(\alpha_{i,z}-2(\kappa,\kappa)_{z}\gamma_{i}-2(\kappa,\kappa)\gamma_{i,z}+c_{\bar{z}}\gamma_{i}+c\gamma_{i,\bar{z}}+2(q_{i,\bar{z}},\kappa)+2(q_{i},\kappa_{\bar{z}}))\\
&=-2(\alpha_{i,z}+2(\kappa,\kappa)_{z}\gamma_{i}+(\kappa,\kappa)\bar{\beta_{i}}-\frac{c\beta_{i}}{2}+2(q_{i,\bar{z}},\kappa)+2(q_{i},\kappa_{\bar{z}}))\\
\end{split}
\end{equation*}
while
\begin{equation*}
(q_{i,\bar{z}},\kappa)=(-\bar{\beta_{i}}\kappa-2\gamma_{i}\kappa_{z},\kappa)=-\bar{\beta_{i}}(\kappa,\kappa)-\gamma_{i}(\kappa,\kappa)_{z}.
\end{equation*}
\end{proof}

The equations \eqref{eq:1} amount to a recursive scheme for
constructing a formal conserved quantity starting with $\gamma_0$ so
long as we can be assured that each $\gamma_{i-1}$ defined by
\eqref{eq:5} is real and that \eqref{eq:4} is solvable for each $i$.
For this, we need:
\begin{lemma}\label{lemma1}
Let $(\Lambda,\eta)$ be an isothermic surface in $S^{n}$ and let
$z=u+iv$ be a holomorphic coordinate on $\Sigma$ such that
$\eta=-\psi\wedge(\psi_{\bar{z}}\D z+\psi_{z}\D \bar{z})$.

Let $\gamma$ be a real function and $q\in\Gamma(V^{\perp})$ such that
\begin{equation*}
 D_{z}q=2\gamma_{\bar{z}}\kappa-2\gamma D_{\bar{z}}\kappa.
\end{equation*}
Define $\hat{\gamma}$ by
\begin{equation*}
\hat{\gamma}=2\gamma_{zz}+c\gamma+2(q,\kappa)
\end{equation*}
and suppose $\hat{\gamma}$ is real.  Then
$2\hat{\gamma}_{zz}+c\hat{\gamma}$ is also real.
\end{lemma}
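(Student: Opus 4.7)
The plan is to show that $\mu := 2\hat\gamma_{zz} + c\hat\gamma$ is real by proving $\mu - \bar\mu = 0$. Since $\hat\gamma$ is real, this is the same as $2(\hat\gamma_{zz} - \hat\gamma_{\bar{z}\bar{z}}) + (c-\bar{c})\hat\gamma = 0$. A useful preliminary observation: since $q$ is a real section of $V^\perp$ and $\kappa$ is real (we are in isothermic curvature line coordinates), the pairing $(q,\kappa)$ is real, so the hypothesis ``$\hat\gamma$ is real'' reduces to the purely $\gamma$--$c$ identity
\begin{equation*}
(\star)\qquad 2(\gamma_{zz} - \gamma_{\bar{z}\bar{z}}) + (c-\bar{c})\gamma = 0.
\end{equation*}

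The core step is to expand $\hat\gamma_{zz} - \hat\gamma_{\bar{z}\bar{z}}$ using $\hat\gamma = 2\gamma_{zz} + c\gamma + 2(q,\kappa)$ and simplify. Two pieces need genuine input. For the fourth-order piece $4(\gamma_{zzzz} - \gamma_{\bar{z}\bar{z}\bar{z}\bar{z}})$ I would apply $\partial_z^2 + \partial_{\bar{z}}^2$ to $(\star)$ and solve. For $4[(q,\kappa)_{zz} - (q,\kappa)_{\bar{z}\bar{z}}]$ I would use $D_z q = 2\gamma_{\bar{z}}\kappa - 2\gamma D_{\bar{z}}\kappa$ together with its complex conjugate $D_{\bar{z}}q = 2\gamma_z\kappa - 2\gamma D_z\kappa$; the Ricci equation (which in the isothermic case with $\kappa$ real reduces to $[D_z,D_{\bar{z}}]\kappa = 0$) kills the commutator cross-terms, and the Codazzi equation gives $D_z^2\kappa - D_{\bar{z}}^2\kappa = -\tfrac{1}{2}(c-\bar{c})\kappa$, which converts $(q, D_z^2\kappa - D_{\bar{z}}^2\kappa)$ into a term proportional to $(c-\bar{c})(q,\kappa)$.

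It then remains to assemble $2(\hat\gamma_{zz} - \hat\gamma_{\bar{z}\bar{z}}) + (c-\bar{c})\hat\gamma$ and collect cancellations: the $(c-\bar{c})(q,\kappa)$ term produced by Codazzi cancels the $(q,\kappa)$-part of $(c-\bar{c})\hat\gamma$; the first-order-in-$\gamma$ terms cancel via the Gauss equation $c_{\bar{z}} = 4(\kappa,\kappa)_z$ (matching the $(D_z\kappa,\kappa)$-coefficients from $(q,\kappa)_{zz}-(q,\kappa)_{\bar{z}\bar{z}}$ against the $c_z\gamma_z$, $\bar{c}_{\bar{z}}\gamma_{\bar{z}}$ contributions from the direct expansion combined with the differentiated $(\star)$); the $c_{zz}\gamma - \bar{c}_{\bar{z}\bar{z}}\gamma$ piece vanishes because Gauss also forces $\bar{c}_{zz} = c_{\bar{z}\bar{z}}$; and the remaining $\gamma_{zz},\gamma_{\bar{z}\bar{z}}$ terms, after one last application of $(\star)$, become $-c(c-\bar{c})\gamma$, which cancels the $c^2\gamma - c\bar{c}\gamma$ piece from $(c-\bar{c})c\gamma$. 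The main obstacle is bookkeeping rather than any conceptual depth: four distinct cancellations each rely on a different structure equation---Gauss, Codazzi, Ricci, and $(\star)$ (used both algebraically and after differentiation)---and one must group terms carefully to see all of them line up.
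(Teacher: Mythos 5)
Your proof is correct and follows essentially the same route as the paper's: a direct verification that the imaginary part of $2\hat{\gamma}_{zz}+c\hat{\gamma}$ vanishes, using the conformal Gauss, Codazzi and Ricci (flatness of $D$) equations together with the equation for $D_zq$ and its complex conjugate. The only organisational difference is that the paper computes $2\hat{\gamma}_{\bar{z}\bar{z}}+\bar{c}\hat{\gamma}$ by differentiating the unconjugated formula $\hat{\gamma}=2\gamma_{zz}+c\gamma+2(q,\kappa)$, so the leading term is the manifestly real mixed derivative $4\gamma_{zz\bar{z}\bar{z}}$ and the hypothesis that $\hat{\gamma}$ is real is used only once (to identify $2\hat{\gamma}_{zz}+c\hat{\gamma}$ with the conjugate of the computed quantity), whereas your version, working with $\gamma_{zzzz}-\gamma_{\bar{z}\bar{z}\bar{z}\bar{z}}$ and the identity $(\star)$ and its second derivatives, carries somewhat heavier but equally valid bookkeeping.
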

\begin{proof}
We compute:
\begin{equation*}
2\hat{\gamma}_{\bar{z}\bar{z}}+\bar{c}\hat{\gamma}=
4\gamma_{zz\bar{z}\bar{z}}+2c_{\bar{z}\bar{z}}\gamma+4c_{\bar{z}}\gamma_{\bar{z}}+2c\gamma_{\bar{z}\bar{z}}+
4(q,\kappa)_{\bar{z}\bar{z}}+
\bar{c}\bigl(2\gamma_{zz}+c\gamma+2(q,\kappa)\bigr).
\end{equation*}
The conformal Gauss equation says that $c_{\bar{z}}=4(\kappa,\kappa)_{z}$ that
$c_{\bar{z}\bar{z}}=4(\kappa,\kappa)_{z\bar{z}}$ is real and we
readily conclude that
\begin{align*}
\im(2\hat{\gamma}_{\bar{z}\bar{z}}+\bar{c}\hat{\gamma})&=
\im\bigl(4c_{\bar{z}}\gamma_{\bar{z}}+4(q,\kappa)_{\bar{z}\bar{z}}+2\bar{c}(q,\kappa)\bigr)\\
&=\im\bigl(16\gamma_{\bar{z}}(\kappa,\kappa)_{z}+4(D^{2}_{\bar{z}\bar{z}}q,\kappa)+8(D_{\bar{z}}q,D_{\bar{z}}\kappa)+
2(q,2D^{2}_{\bar{z}\bar{z}}\kappa+\bar{c}\kappa)\bigr)\\
&=\im\bigl(16\gamma_{\bar{z}}(\kappa,\kappa)_{z}
+4(D^{2}_{\bar{z}\bar{z}}q,\kappa)+8(D_{\bar{z}}q,D_{\bar{z}}\kappa)\bigr),
\end{align*}
thanks to the conformal Codazzi equation.  Now differentiate the
complex conjugate of the equation for $q$ and substitute in to get,
after a short computation:
\begin{equation*}
\im(2\hat{\gamma}_{\bar{z}\bar{z}}+\bar{c}\hat{\gamma})=
\im\bigl(8\gamma_{z\bar{z}}(\kappa,\kappa)+
12\bigl(\gamma_z(\kappa,\kappa)_{\bar{z}}+\gamma_{\bar{z}}(\kappa,\kappa)_z\bigr)\bigr)=0.
\end{equation*}
\end{proof}

\begin{theorem}\label{theorem-existence of a fcq}Let $(\Lambda,\eta)$
be an isothermic surface in $S^{n}$.  Then locally, away from the
zeros of $\eta$, $(\Lambda,\eta)$ has always a formal conserved
quantity.
\end{theorem}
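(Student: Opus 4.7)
Working locally away from the zeros of $\eta$, I would fix a holomorphic coordinate $z$ with $\eta=-\psi\wedge(\psi_{\bar z}\D z+\psi_z\D\bar z)$ and expand each $p_i$ in the frame of Proposition~\ref{necessary}.  That proposition reduces the task to producing, for $i=0,-1,-2,\dots$, real functions $\gamma_i$ and real sections $q_i\in\Gamma(V^\perp)$ satisfying \eqref{eq:4}--\eqref{eq:5}; the remaining $\alpha_i,\beta_i$ are then forced by \eqref{eq:2}--\eqref{eq:3}.  The plan is to construct $(\gamma_i,q_i)$ by a downward recursion, starting from $\gamma_0=0$ and $q_0$ equal to any non-zero $D$-parallel section of $V^\perp$, which exists locally because $D$ is flat.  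With this initialisation \eqref{eq:4} at $i=0$ becomes $D_zq_0=0$, \eqref{eq:5} delivers $\gamma_{-1}=2(q_0,\kappa)\in\R$, and $p_0=q_0\neq 0$ as required.

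For the inductive step, assume $\gamma_i,\gamma_{i-1}$ are real and $q_i$ is a real solution of \eqref{eq:4}, and produce real $q_{i-1}$ and $\gamma_{i-2}$.  The crucial point is solvability of \eqref{eq:4} at index $i-1$: working in a local $D$-parallel trivialisation of $V^\perp$ and using the conformal Codazzi equation, a short computation shows that the equation $D_zq_{i-1}=2\gamma_{i-1,\bar z}\kappa-2\gamma_{i-1}D_{\bar z}\kappa$ admits a real local solution exactly when $\Im(2\gamma_{i-1,zz}+c\gamma_{i-1})=0$.  But this is precisely the conclusion of Lemma~\ref{lemma1} applied with $\gamma=\gamma_i$, $q=q_i$, $\hat\gamma=\gamma_{i-1}$, so $q_{i-1}$ exists.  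Once $q_{i-1}$ is in hand, \eqref{eq:5} writes $\gamma_{i-2}=(2\gamma_{i-1,zz}+c\gamma_{i-1})+2(q_{i-1},\kappa)$ as a sum of two real terms---the first real by Lemma~\ref{lemma1}, the second because $\kappa$ and $q_{i-1}$ are real---so $\gamma_{i-2}\in\R$ and the induction continues.

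The main obstacle, and the reason Lemma~\ref{lemma1} is the real content of the argument, is recognising that the integrability condition for \eqref{eq:4} at one level is exactly the reality of $2\gamma_{zz}+c\gamma$ delivered at the previous level; this matching is what the conformal Gauss and Codazzi equations ultimately force.  Granting it, iteration produces the entire sequence $\{(\gamma_i,q_i)\}_{i\leq 0}$, hence all coefficients $p_i$, and thus the desired formal conserved quantity $p(t)$ with $p_0\neq 0$.
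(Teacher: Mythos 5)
Your proposal is correct and follows essentially the same route as the paper: the same reduction via Proposition~\ref{necessary} to a downward recursion for $(\gamma_i,q_i)$ starting from $\gamma_0=0$ and a parallel $q_0$, with Lemma~\ref{lemma1} supplying both the reality of the next $\gamma$ and the integrability of \eqref{eq:4}. The only difference is a harmless re-indexing of the induction step (you use one invocation of the lemma where the paper phrases it as two), so there is nothing substantive to add.
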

\begin{proof}
We work on a simply connected open set with holomorphic coordinate
$z$ for which $\eta=-\psi\wedge(\psi_{\bar{z}}\D z+\psi_{z}\D
\bar{z})$.  We inductively construct a formal power series
$p(t)=\sum_{i\leq 0} p_{i}t^{i}$ with
\begin{equation*}
p_{i}=\alpha_{i} \psi+\beta_{i}\psi_{z}+\bar{\beta_{i}}\psi_{\bar{z}}+\gamma_{i}\hat{\psi}+q_{i},
\end{equation*}
with $\alpha_i,\gamma_i$ real, $\beta_i$ complex and $q_i$ a section
of $V^{\perp}$ satisfying \eqref{eq:1}.   Then, by
Proposition~\ref{necessary}, $p(t)$ will be a formal conserved quantity.

We begin by taking $\gamma_0=0$ and $q_0$ a non-zero parallel section
of $V^{\perp}$ so that $p_0=q_0$.  The issue is to define $\gamma_i$ and
$q_i$ for then $\alpha_i,\beta_i$ are given
by \eqref{eq:2} and \eqref{eq:3}.  Suppose now that we have
$\gamma_j,q_j$, $j>i\in\Z^{-}$ with
\begin{align*}
D_{z}q_{j}&=2\gamma_{j,\bar{z}}\kappa-2\gamma_{j}D_{\bar{z}}\kappa\\
\gamma_{j}&=2\gamma_{j+1,zz}+c\gamma_{j+1}+2(q_{j+1},\kappa),
\end{align*}
and each $\gamma_j,q_j$ real.  Define $\gamma_{i}$ to be
$2\gamma_{i+1,zz}+c\gamma_{i+1}+2(q_{i+1},\kappa)$ and note that
Lemma~\ref{lemma1} (with $\hat\gamma=\gamma_{i+1}$) tells us that
$\gamma_i$ is real.  Since $D$ is flat, equation \eqref{eq:4} for
$q_i$ is integrable when
$\im\bigl(D_{\bar{z}}(\gamma_{i,\bar{z}}\kappa-\gamma_iD_{\bar{z}}\kappa)\bigr)=0$
however,
\begin{equation*}
\im\bigl(D_{\bar{z}}(\gamma_{i,\bar{z}}\kappa-\gamma_iD_{\bar{z}}\kappa)\bigr)=
\im\bigl(\gamma_{i,\bar{z}\bar{z}}\kappa-\gamma_iD^{2}_{\bar{z}\bar{z}}\kappa\bigr)=
\im\bigl((\gamma_{i,\bar{z}\bar{z}}+\tfrac{\bar{c}}{2}\gamma_i)\kappa\bigr),
\end{equation*}
by the conformal Codazzi equation, and this vanishes thanks to a
second application of Lemma~\ref{lemma1} with $\hat\gamma=\gamma_i$.
Thus, by induction, $\gamma_i,q_i$ are defined for all $i\in\Z^{-}$
satisfying \eqref{eq:4} and \eqref{eq:5} and we are done.
\end{proof}

Theorem \ref{theorem-existence of a fcq} is not completely
satisfying: the result is only local and the quadrature that
determines each $q_i$ means that we lack an explicit formula for the
$p_i$.  More, these quadratures introduce an infinite number of
constants of integration (parallel sections of $V^{\perp}$).

However, the following simple observation allows us to control the
constants of integration: if
$p(t)=\sum_{i\leq 0}p_it^{i}$ is a local formal conserved
quantity for $(\Lambda,\eta)$ with $r(t)=\sum_{i\leq
0}r_tt^i=\bigl(p(t),p(t)\bigr)$ then $r(t)$ is constant by
Proposition~\ref{top term}.  Thus, for all $i$,
\begin{equation}
\label{eq:7}
2(p_0,q_i)=2(p_0,p_i)=r_{i}-\sum_{\substack{k,l<0\\k+l=i}}(p_{k},p_{l}).
\end{equation}
Thus the component of each $q_i$ along $p_0$ is completely determined
up to a constant by the $p_j$ for $0\geq j>i$.

We use this to refine Theorem~\ref{theorem-existence of a fcq}:
\begin{proposition}\label{proposition-existence of formal}
Let $(\Lambda,\eta)$ be isothermic and
$r(t):=\sum_{i\leq 0}
r_{i}t^{i}$ a formal Laurent series with coefficients in
$\mathbb{R}$, such that $r_{0}>0$.  Then, locally, away from zeros of
$\eta$, there is a formal
conserved quantity $p(t)$ of $(\Lambda,\eta)$ such that
$\bigl(p(t),p(t)\bigr)=r(t)$.
\end{proposition}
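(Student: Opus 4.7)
The plan is to refine the construction of Theorem~\ref{theorem-existence of a fcq} by using, at each inductive stage, the free $D$-parallel section of $V^{\perp}$ to calibrate the corresponding coefficient of $\bigl(p(t),p(t)\bigr)$.  Equation~\eqref{eq:7} is the guiding observation: it is precisely the component of $q_{i}$ along $p_{0}$ that controls the $t^{i}$-coefficient of the self-inner product, and adding a $D$-parallel section $\sigma$ to $q_{i}$ shifts it by the constant $(q_{0},\sigma)$.

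Concretely, I would first fix $p_{0}=q_{0}\in\Gamma(V^{\perp})$ $D$-parallel with $(q_{0},q_{0})=r_{0}$, which is possible since $V^{\perp}$ is positive-definite and $r_{0}>0$.  Assume inductively that $p_{0},\ldots,p_{i+1}$ satisfy \eqref{eq:1} at levels $0$ through $i+1$ and that the coefficients of $t^{0},t^{-1},\ldots,t^{i+1}$ of the truncated self-inner product $\bigl(P_{i+1},P_{i+1}\bigr)$, with $P_{j}(t):=\sum_{k=j}^{0}p_{k}t^{k}$, match $r(t)$.  Then I would apply the procedure of Theorem~\ref{theorem-existence of a fcq} to produce a provisional $p_{i}^{(0)}$ using an arbitrary particular solution $q_{i}^{(0)}$ of \eqref{eq:4}, and afterwards seek a $D$-parallel $\sigma\in\Gamma(V^{\perp})$ such that the corrected $q_{i}:=q_{i}^{(0)}+\sigma$ forces the $t^{i}$-coefficient of $\bigl(P_{i},P_{i}\bigr)$ to equal $r_{i}$.

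The main obstacle will be to show that the $t^{i}$-coefficient of $\bigl(P_{i}^{(0)},P_{i}^{(0)}\bigr)$, before the $\sigma$-correction, is already a constant on $\Sigma$; only then can the constant shift $(q_{0},\sigma)$ compensate for the discrepancy with $r_{i}$.  To this end I would reprise the proof of Proposition~\ref{top term}: since $\eta$ is skew,
\begin{equation*}
\D\bigl(P_{i}^{(0)},P_{i}^{(0)}\bigr)=2\bigl((\D+t\eta)P_{i}^{(0)},P_{i}^{(0)}\bigr),
\end{equation*}
and the inductive hypothesis collapses $(\D+t\eta)P_{i}^{(0)}$ to only the two ``endpoint'' contributions $t\,\eta q_{0}+t^{i}\,\D p_{i}^{(0)}$.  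The $t^{i}$-coefficient of the resulting inner product is then $2(\D p_{i}^{(0)},q_{0})$, and the component expansions in the proof of Proposition~\ref{necessary}, read with $p_{i-1}$ set to zero, show that equations \eqref{eq:2}--\eqref{eq:4} at level $i$ force every surviving component of $\D p_{i}^{(0)}$ to lie along $\psi$ or $\psi_{\bar{z}}$ --- hence in $V$, hence orthogonal to $q_{0}\in V^{\perp}$.  Once constancy is secured, the needed $\sigma$ exists because the functional $\sigma\mapsto(q_{0},\sigma)$ on the space of $D$-parallel sections of $V^{\perp}$ is non-zero (it sends $q_{0}$ to $r_{0}>0$) and hence surjective onto $\R$, and the induction closes.
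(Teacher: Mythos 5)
Your proposal is correct and follows essentially the same route as the paper: both revisit the induction of Theorem~\ref{theorem-existence of a fcq}, normalise $(q_0,q_0)=r_0$, observe that the $t^i$-coefficient of the truncated self-inner product is already constant because $\D p_j+\eta p_{j-1}=0$ for $j>i$ while $\D p_i\perp V^{\perp}$ once $q_i$ solves \eqref{eq:4}, and then absorb the discrepancy by adding a constant multiple of $p_0$ (your general $D$-parallel $\sigma$ reduces to exactly this, since only $(q_0,\sigma)$ matters).
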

\begin{proof}
We revisit the induction of Theorem \ref{theorem-existence of a fcq}.
Begin with $\gamma_0=0$ and take $p_0=q_0$ to be a parallel section
of $V^{\perp}$ with $(q_0,q_0)=r_0$.  For the induction step, suppose
we have defined $\gamma_j,q_j$ and so $p_j$, for $0\geq j\geq i$,
with $\sum_{k+l=j}(p_k,p_l)=r_j$ for $j>i$.  We then have that $\D
p_j+\eta p_{j-1}=0$, for $j>i$, while $\D p_i \perp V^{\perp}$ since
$q_i$ solves \eqref{eq:4}.  It follows that $\sum_{k+l=i}(p_k,p_l)$
is constant so that, replacing $p_i$ by $p_i+s_ip_0$, for a suitable
constant $s_i$, we may ensure that $\sum_{k+l=i}(p_k,p_l)=r_i$ also.
\end{proof}

In codimension 1, we can say more: in this case, $p_0$ frames
$V^{\perp}$ so that each $q_i$ is completely determined via
\eqref{eq:7} by $p_{j}$, $j>i$ and $r_{i}$.  It follows at once that
$p(t)$ is uniquely determined in this case on the domain of the
holomorphic coordinate $z$ by $p_0$ and $r(t)$ (and so determined up
to sign by $r(t)$ alone).  We use this to patch together the local
solutions provided by Proposition~\ref{proposition-existence of
formal} to give a global formal conserved quantity away from the zeros
of $\eta$.
\begin{theorem}
\label{theorem-existence of formal}
Let $(\Lambda,\eta)$, $\Lambda:\Sigma\to S^3$ be an isothermic
surface in the $3$-sphere and let $Z\subset \Sigma$ be the (discrete)
zero-set of $\eta$.  Let $r(t)=\sum_{i\leq 0}r_it^i\in\R[[t,t^{-1}]]$
be a formal Laurent series with $r_{0}>0$.

Then there is a formal conserved quantity $p(t)$, unique up to sign,
defined on $\Sigma\setminus Z$ with $\bigl(p(t),p(t)\bigr)=r(t)$.
\end{theorem}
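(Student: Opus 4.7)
The strategy is to combine the local existence theorem (Proposition~\ref{proposition-existence of formal}) with a uniqueness property that is available only in codimension one, and to patch local solutions together across $\Sigma\setminus Z$.

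The heart of the argument is a local uniqueness lemma. On a simply connected patch with holomorphic coordinate $z$ putting $\eta$ in the standard form of Proposition~\ref{necessary}, any two formal conserved quantities $p(t),p'(t)$ satisfying $(p(t),p(t))=(p'(t),p'(t))=r(t)$ and $p_{0}=p'_{0}$ must coincide.  I would prove this by downward induction on $i\in\Z^{-}_{0}$: at each level \eqref{eq:5} determines $\gamma_{i}$ from the data at index $i+1$, \eqref{eq:4} then determines $q_{i}$ up to a $D$-parallel section of $V^{\perp}$, and \eqref{eq:2} and \eqref{eq:3} finally pin down $\beta_{i}$ and $\alpha_{i}$.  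The codimension one hypothesis enters precisely at the middle step: since $n=3$, $V^{\perp}$ has real rank one, and because $(p_{0},p_{0})=r_{0}>0$ the parallel section $p_{0}$ is nowhere zero and so frames $V^{\perp}$.  Hence the residual ambiguity in $q_{i}$ is a single real multiple of $p_{0}$, and this multiple is forced to agree for $p$ and $p'$ by identity \eqref{eq:7}, which expresses the constancy of $(p(t),p(t))$.

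For the global step I would cover $\Sigma\setminus Z$ by simply connected open sets $U_{\alpha}$ each carrying a coordinate in which $\eta$ has the standard form, and use Proposition~\ref{proposition-existence of formal} to produce local conserved quantities $p^{\alpha}(t)$ with $(p^{\alpha},p^{\alpha})=r$.  The flat positive-definite real line bundle $V^{\perp}$ on $\Sigma\setminus Z$ carries a global $D$-parallel section $p_{0}$ with $(p_{0},p_{0})=r_{0}$, unique up to sign; after possibly changing signs, arrange that each $p^{\alpha}_{0}$ agrees with $p_{0}$ on $U_{\alpha}$.  The local uniqueness lemma, applied to $p^{\alpha}(t)$ and $p^{\beta}(t)$ on each overlap $U_{\alpha}\cap U_{\beta}$, shows that the local solutions agree; they therefore assemble into the desired global formal conserved quantity $p(t)$, whose uniqueness up to the overall sign of $p_{0}$ is immediate from the same local uniqueness.

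The principal obstacle is the local uniqueness step.  The quadrature \eqref{eq:4} introduces at each level a constant of integration lying in the space of $D$-parallel sections of $V^{\perp}$; it is only the rank-one geometry coming from $n=3$ that collapses this ambiguity to a single real parameter, and only then that the formal constancy identity \eqref{eq:7} can eliminate it.  This is precisely why the theorem is restricted to the $3$-sphere.
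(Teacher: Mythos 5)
Your argument is correct and follows essentially the same route as the paper: a global parallel section $p_0$ of the orientable line bundle $V^{\perp}$, local existence from Proposition~\ref{proposition-existence of formal}, local uniqueness on coordinate patches forced by the rank-one fact that the integration constant in \eqref{eq:4} is a multiple of $p_0$ pinned down by \eqref{eq:7}, and patching on overlaps. You merely make explicit the downward induction behind the local uniqueness, which the paper compresses into the remark preceding the theorem.
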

\begin{proof}
Since $\Sigma$ is orientable, $V^{\perp}$ is orientable and so has a
global section $p_0$ with $(p_0,p_0)=r_0$.  We now use
Proposition~\ref{proposition-existence of formal} to cover
$\Sigma\setminus Z$ with open sets $U_{\alpha}$ on which are defined
formal conserved quantities $p^{\alpha}$ with $p_0^{\alpha}=p_0$ and
$\bigl(p^{\alpha}(t),p^{\alpha}(t)\bigr)=r(t)$.  If $x\in
U_{\alpha}\cap U_{\beta}$, then we may find a holomorphic coordinate
$z$ near $x$ for which $\eta=-\psi\wedge(\psi_{\bar{z}}\D z+\psi_z\D
\bar{z})$. Thus, near $x$, $p^{\alpha}_i$ and $p^{\beta}_i$ are
uniquely determined by $p_0$ and $r(t)$ and so must coincide.
\end{proof}

Since holomorphic quadratic differentials on a $2$-torus are
constant, we immediately conclude:
\begin{corollary}\label{th:1}
Let $(\Lambda,\eta)$ be an isothermic $2$-torus in the $3$-sphere and
let $r(t)=\sum_{i\leq 0}r_it^i\in\R[[t,t^{-1}]]$ be a formal Laurent
series with $r_{0}>0$.

Then there is a globally defined formal conserved quantity $p(t)$,
unique up to sign, with $\bigl(p(t),p(t)\bigr)=r(t)$.
\end{corollary}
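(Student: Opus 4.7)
The plan is to show that on a $2$-torus the zero-set $Z$ of $\eta$ is empty and then to invoke Theorem \ref{theorem-existence of formal} directly; no further work will be needed.

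The key input is the dictionary between $\eta$ and a holomorphic quadratic differential recalled in Section \ref{sec:isoth-spec-isoth}: the formula $q(X,Y)\sigma=\eta_X\D_Y\sigma$ produces $q\in\Gamma(S^{2}T^{*}\Sigma)$ with the same zero-set as $\eta$, and the closedness of $\eta$ is equivalent to the holomorphicity of $q$. On the $2$-torus, the space of holomorphic quadratic differentials is one-dimensional, spanned by $\D z^{2}$ for the flat uniformising coordinate $z$, and every non-zero such differential is nowhere-vanishing. Since $(\Lambda,\eta)$ is isothermic by assumption, $\eta$ and hence $q$ is not identically zero, so $q$ is nowhere zero and therefore $Z=\emptyset$.

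With this in hand, I would apply Theorem \ref{theorem-existence of formal} to $(\Lambda,\eta)$ with the prescribed series $r(t)$: the theorem produces a formal conserved quantity $p(t)$, unique up to sign, defined on $\Sigma\setminus Z$ with $\bigl(p(t),p(t)\bigr)=r(t)$. Since $Z=\emptyset$, this $p(t)$ is defined on all of $\Sigma$, which is precisely the claim.

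There is no substantive obstacle here; the corollary is essentially a restatement of Theorem \ref{theorem-existence of formal} on a surface where the quadratic differential defining the isothermic structure cannot develop zeros, and the authors flag the single nontrivial ingredient (constancy of holomorphic quadratic differentials on the torus) in the sentence immediately preceding the statement.
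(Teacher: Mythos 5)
Your proposal is correct and follows exactly the paper's (one-line) argument: the corollary is deduced from Theorem~\ref{theorem-existence of formal} by noting that the holomorphic quadratic differential $q$ associated to $\eta$ is constant on a $2$-torus, hence nowhere vanishing since $\eta\neq 0$, so $Z=\emptyset$. You have simply spelled out the step the authors compress into the sentence preceding the statement.
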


\section{Special isothermic surfaces of type $d$ in $S^{n}$}
\label{sec:spec-isoth-surf}

In this section, we fix an isothermic surface $(\Lambda,\eta)$ and a
holomorphic coordinate $z$ with $\eta=-\psi\wedge(\psi_{\bar{z}}\D
z+\psi_{z}\D \bar{z})$.

As an application of the ideas of Section~\ref{sec:fcq}, we ask when
$(\Lambda,\eta)$ is special of type $d$.  Our starting point is the
simple observation that $q(t)$ is a polynomial conserved quantity of
degree $d$ then $p(t)=t^{-d}q(t)$ is a formal conserved quantity with
$p_{i}=0$, for all $i<-d$, and conversely.  Moreover, we can choose
the constants of integration so that $p_{i}=0$, for all $i<-d$,
precisely when $\gamma_{-d-1}=0$.  Thus, in view of \eqref{eq:5}, we
have:
\begin{theorem}\label{corollary-arbitrary codimension}
Let $d\in\mathbb{N}_{0}$. $(\Lambda,\eta)$ is a special isothermic
surface of type $d$ if and only if there exists a formal conserved
quantity $p(t)$ of $(\Lambda,\eta)$ such that
\begin{equation*}
2\gamma_{-d,zz}+c\gamma_{-d}+2(q_{-d},\kappa)=0.
\end{equation*}
In this situation, if $\gamma_{-d}\neq 0$, then $(\Lambda,\eta)$ is a
special isothermic surface of type $d$ in $E(p_{-d})$.
\end{theorem}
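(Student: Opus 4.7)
The plan is to treat the two implications separately, with a pivotal observation about $\eta$ tying them together.

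The forward direction is straightforward. If $P(t)=\sum_{i=0}^{d}P_{i}t^{i}$ is a polynomial conserved quantity of degree $d$, then $p(t):=t^{-d}P(t)$ is a formal Laurent series which is itself annihilated by $\D+t\eta$, with $p_{0}=P_{d}\neq 0$ and $p_{i}=0$ for $i<-d$. In particular $\gamma_{-d-1}=0$, so applying \eqref{eq:5} at index $-d$ yields the stated equation.

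For the converse, I would build the desired polynomial conserved quantity by truncation. The hypothesis combined with \eqref{eq:5} at index $-d$ gives $\gamma_{-d-1}=0$, and then \eqref{eq:2}, \eqref{eq:3} immediately force $\beta_{-d-1}=0$ and $\alpha_{-d-1}=0$. Hence $p_{-d-1}=q_{-d-1}\in\Gamma(V^{\perp})$. The key observation is that $\eta$, taking values in $\Lambda\wedge\Lambda^{\perp}\subset V\wedge V$, annihilates $V^{\perp}$; so $\eta p_{-d-1}=0$. The equation $\D p_{-d}+\eta p_{-d-1}=0$ (the coefficient of $t^{-d}$ in $(\D+t\eta)p(t)=0$) therefore collapses to $\D p_{-d}=0$, i.e.\ $p_{-d}$ is a constant section of $\underline{\mathbb{R}}^{n+1,1}$. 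Now consider the truncation $\tilde{p}(t):=\sum_{i=-d}^{0}p_{i}t^{i}$. Expanding
\begin{equation*}
(\D+t\eta)\tilde{p}(t)=(\D p_{-d})t^{-d}+\sum_{i=-d+1}^{0}\bigl(\D p_{i}+\eta p_{i-1}\bigr)t^{i}+(\eta p_{0})t,
\end{equation*}
the middle sum vanishes since it already vanishes inside $p(t)$, the $t^{-d}$ term vanishes by the paragraph above, and the $t$ term vanishes because $p_{0}$ is $D$-parallel in $V^{\perp}$ by Proposition~\ref{top term} and $\eta|_{V^{\perp}}=0$. Thus $P(t):=t^{d}\tilde{p}(t)$ is a polynomial in $t$ of degree at most $d$ with $(\D+t\eta)P(t)=0$; padding with a factor of $t$ if the degree is strictly less than $d$ still yields a conserved quantity of type $d$.

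For the final assertion, the hypothesis $\gamma_{-d}\neq 0$ guarantees $p_{-d}\neq 0$, since $\gamma_{-d}$ is the $\hat{\psi}$-component in the frame expansion. Combined with $\D p_{-d}=0$ from above, $p_{-d}$ is a non-zero constant vector $w\in\mathbb{R}^{n+1,1}$. The polynomial conserved quantity $P(t)=t^{d}\tilde{p}(t)$ has constant term exactly $p_{-d}=w\in\langle w\rangle$, matching the refined definition of special isothermic in $E(p_{-d})$.

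The only real obstacle is the truncation step: one must exploit precisely the fact that $\eta$ annihilates $V^{\perp}$ to force $\D p_{-d}=0$ from the vanishing of the auxiliary coefficients $\alpha_{-d-1},\beta_{-d-1},\gamma_{-d-1}$; after that the rest of the argument is bookkeeping in powers of $t$.
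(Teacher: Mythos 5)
Your proof is correct and takes essentially the same route as the paper, which dispatches the theorem in the paragraph preceding its statement: pass between $P(t)$ and $t^{-d}P(t)$, observe via \eqref{eq:5} that the displayed equation is exactly $\gamma_{-d-1}=0$, deduce $p_{-d-1}\in\Gamma(V^{\perp})$ from \eqref{eq:2} and \eqref{eq:3}, and kill the tail. One small repair: the inclusion $\Lambda\wedge\Lambda^{\perp}\subset V\wedge V$ you invoke is false, since the hyperplane bundle $\Lambda^{\perp}$ is not contained in the rank-$4$ bundle $V$; the fact you actually need---that $\eta$ annihilates $V^{\perp}$---is nevertheless true because in the chosen coordinate $\eta=-\psi\wedge(\psi_{\bar{z}}\D z+\psi_{z}\D\bar{z})$ with $\psi,\psi_{z},\psi_{\bar{z}}$ all sections of $V\otimes\mathbb{C}$, so $\eta$ takes values in $\bigwedge^{2}V$ and hence kills $V^{\perp}$ by the formula $(u\wedge v)w=(u,w)v-(v,w)u$.
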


In this case, with $r(t)=\bigl(p(t),p(t)\bigr)$, we have
$r_{-2d}=(p_{-d},p_{-d})$ and $r_{-2d+1}=2(p_{-d},p_{-d+1})$ and so,
by Proposition~\ref{top term}, these latter inner products are
constant.  Perhaps surprisingly, a converse is available.  First a
lemma:
\begin{lemma}
\label{th:2}
Let $p(t)$ be a formal conserved quantity for $(\Lambda,\eta)$.
Then, for $i,j\leq 0$,
\begin{equation}
\label{eq:14}
(\eta_{\del/\del z}p_{i},p_j)=\gamma_{i,\bar{z}}\gamma_{j}-\gamma_i\gamma_{j,\bar{z}}
\end{equation}
\end{lemma}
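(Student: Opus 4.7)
My plan is to exploit the explicit form of $\eta$ and the simple structure of the frame. Writing $\eta=-\psi\wedge(\psi_{\bar{z}}\D z+\psi_{z}\D\bar{z})$, we have
\begin{equation*}
\eta_{\del/\del z}=-\psi\wedge\psi_{\bar{z}}.
\end{equation*}
Applying the isomorphism from the footnote, $(u\wedge v)w=(u,w)v-(v,w)u$, to $p_i$ gives
\begin{equation*}
\eta_{\del/\del z}p_{i}=-(\psi,p_{i})\psi_{\bar{z}}+(\psi_{\bar{z}},p_{i})\psi,
\end{equation*}
so the task reduces to computing these two inner products and then pairing the result with $p_j$.

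The inner products collapse because of the orthogonality relations built into the frame. From $(\hat\psi,\psi)=-1$, $(\hat\psi,\psi_{\bar{z}})=0$, the nullity of $\psi$, and $q_i\in\Gamma(V^{\perp})$, we read off $(\psi,p_{i})=-\gamma_{i}$. From $|\D\psi|^2=|\D z|^2$ we get $(\psi_z,\psi_{\bar{z}})=\half$ while $(\psi_{\bar{z}},\psi_{\bar{z}})=0$; together with the remaining orthogonalities this yields $(\psi_{\bar{z}},p_{i})=\half\beta_{i}$. Now invoking \eqref{eq:2}, $\beta_i=-2\gamma_{i,\bar{z}}$, so $(\psi_{\bar{z}},p_{i})=-\gamma_{i,\bar{z}}$. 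Substituting,
\begin{equation*}
\eta_{\del/\del z}p_{i}=\gamma_{i}\psi_{\bar{z}}-\gamma_{i,\bar{z}}\psi.
\end{equation*}

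Finally I pair with $p_j$, using the same two inner-product computations with $j$ in place of $i$:
\begin{equation*}
(\eta_{\del/\del z}p_{i},p_{j})=\gamma_{i}(\psi_{\bar{z}},p_{j})-\gamma_{i,\bar{z}}(\psi,p_{j})=-\gamma_{i}\gamma_{j,\bar{z}}+\gamma_{i,\bar{z}}\gamma_{j},
\end{equation*}
which is \eqref{eq:14}. There is no real obstacle here: everything is a direct consequence of the frame's pairing table and the first of the recursion relations in Proposition~\ref{necessary}, so the only care required is bookkeeping with the factor of $\half$ from $(\psi_z,\psi_{\bar{z}})$ and the sign from $(\hat\psi,\psi)=-1$.
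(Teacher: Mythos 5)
Your proof is correct and follows essentially the same route as the paper's: both reduce to the identity $\eta_{\del/\del z}p_{i}=\gamma_{i}\psi_{\bar{z}}-\gamma_{i,\bar{z}}\psi$ (the paper obtains it by substituting the frame expansion of $p_i$ with $\beta_i=-2\gamma_{i,\bar z}$ already inserted, you by computing $(\psi,p_i)$ and $(\psi_{\bar z},p_i)$ from the pairing table) and then pair with $p_j$. The bookkeeping with $(\psi_z,\psi_{\bar z})=\half$ and $(\hat\psi,\psi)=-1$ is handled correctly.
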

\begin{proof}
Recall that $\eta_{\del/\del z}=-\psi\wedge\psi_{\bar{z}}$ while
\begin{equation*}
p_i=\alpha_i\psi-2\gamma_{i,\bar{z}}\psi_z-2\gamma_{i,z}\psi_{\bar{z}}+
\gamma_i\hat{\psi}+q_i
\end{equation*}
so that $\eta_{\del/\del
z}p_i=\gamma_i\psi_{\bar{z}}-\gamma_{i,\bar{z}}\psi$.  Now write $p_j$
in terms of $\gamma_j$ to draw the conclusion.
\end{proof}

With this in hand, we have:
\begin{proposition}
\label{th:3}
$(\Lambda,\eta)$ is locally special isothermic of type at most $d\in\N$ if
and only if it admits a formal conserved quantity for which either
$(p_{-d},p_{-d})$ or, in case $d>1$, $(p_{-d},p_{-d+1})$ is constant.
\end{proposition}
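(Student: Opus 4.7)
For the $(\Rightarrow)$ direction, if $q(t)=\sum_{i=0}^{d}q_{i}t^{i}$ is a polynomial conserved quantity of degree at most $d$, then $p(t):=t^{-d}q(t)$ is a formal conserved quantity with $p_{i}=0$ for all $i<-d$. The coefficient of $t^{-2d}$ in $\bigl(p(t),p(t)\bigr)$ therefore reduces to $(p_{-d},p_{-d})$, and (when $d>1$) the coefficient of $t^{-2d+1}$ reduces to $2(p_{-d},p_{-d+1})$; both are constants by Proposition~\ref{top term}.

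For the converse, the plan is to use the hypothesis to produce a \emph{new} formal conserved quantity $\tilde p(t)$ satisfying $\tilde\gamma_{-d-1}=0$, after which Theorem~\ref{corollary-arbitrary codimension} delivers the conclusion. The key flexibility is that for any Laurent polynomial $f(t)\in\R[t,t^{-1}]$, the product $f(t)p(t)$ is again a formal conserved quantity (since the scalar $f(t)$ commutes with $\D+t\eta$), with leading term preserved when $f(0)=1$. In Case~1 (i.e.\ $(p_{-d},p_{-d})$ constant), I would differentiate the hypothesis, use $\D p_{-d}=-\eta p_{-d-1}$, and apply Lemma~\ref{th:2} to the surviving pairing to obtain the Wronskian-type identity
\[
\gamma_{-d}\gamma_{-d-1,\bar{z}}-\gamma_{-d-1}\gamma_{-d,\bar{z}}=0.
\]
Where $\gamma_{-d}\neq 0$, this says $\gamma_{-d-1}/\gamma_{-d}$ is anti-holomorphic, and reality of the quotient forces it to be a local real constant $a$. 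The choice $\tilde p(t):=(1-at^{-1})p(t)$ then produces $\tilde p_{0}=p_{0}$ and $\tilde\gamma_{-d-1}=\gamma_{-d-1}-a\gamma_{-d}=0$, as required. Case~2 runs in parallel: the extraneous pairing $(p_{-d},\eta_{\partial/\partial z}p_{-d})$ vanishes by the skew-symmetry of $\eta_{\partial/\partial z}$, so one is left with $\gamma_{-d-1}=b\gamma_{-d+1}$ for some local constant $b\in\R$, which is then annihilated by the modification $\tilde p(t):=(1-bt^{-2})p(t)$.

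The main obstacle is the degeneracy on the vanishing loci of $\gamma_{-d}$ (or, in Case~2, of $\gamma_{-d+1}$), where the quotient argument breaks down directly. If $\gamma_{-d}$ vanishes identically on some neighbourhood, then \eqref{eq:5} exhibits $(\Lambda,\eta)$ as locally special of type $d-1$ there via a direct application of Theorem~\ref{corollary-arbitrary codimension}, which is stronger than special of type at most $d$; isolated zeros are absorbed by real-analyticity, so the identity $\gamma_{-d-1}=a\gamma_{-d}$ extends by continuity and the modification applies on a sufficiently small open set. An entirely analogous remark disposes of the corresponding degeneracy in Case~2.
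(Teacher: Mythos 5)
Your proposal is correct and follows essentially the same route as the paper: the forward direction via Proposition~\ref{top term}, and the converse via Lemma~\ref{th:2} to get the Wronskian identity, constancy of the real anti-holomorphic ratio $\gamma_{-d-1}/\gamma_{-d}$ (resp.\ $\gamma_{-d-1}/\gamma_{-d+1}$), and the scalar modification $p(t)-st^{-1}p(t)$ (resp.\ $p(t)-st^{-2}p(t)$) to kill $\gamma_{-d-1}$. Your treatment of the vanishing locus of $\gamma_{-d}$ is in fact slightly more explicit than the paper, which simply assumes without loss of generality that $\gamma_{-d}$ is never zero (otherwise the surface is special of lower type).
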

\begin{proof}
We have already seen necessity of the condition on inner products, so
we turn to the sufficiency.   We suppose, without loss of generality,
that $\gamma_{-d}$ is never zero (otherwise
$(\Lambda,\eta)$ is special of type $k< d$) and observe that
$(p_{-d},p_{-d})$ is constant if and only if $(p_{-d},p_{-d})_z=0$.
However,
\begin{equation*}
\begin{split}
\half(p_{-d},p_{-d})_z&=-(\eta_{\del/\del z}p_{-d-1},p_{-d})\\
&=
-\gamma_{-d-1,\bar{z}}\gamma_{-d}+\gamma_{-d-1}\gamma_{-d,\bar{z}}=
-\gamma_{-d}^2\bigl(\frac{\gamma_{-d-1}}{\gamma_{-d}}\bigr)_{\bar{z}},
\end{split}
\end{equation*}
by Lemma~\ref{th:2}.  Thus, when $(p_{-d},p_{-d})$ is constant,
$\gamma_{-d-1}/\gamma_{-d}$ is constant also and we have
$\gamma_{-d-1}=s\gamma_{-d}$, for some $s\in\R$.  Now define a new
formal conserved quantity $\hat{p}(t)=p(t)-st^{-1}p(t)$ and observe
that $\hat{\gamma}_{-d-1}=\gamma_{-d-1}-s\gamma_{-d}=0$ so that
$(\Lambda,\eta)$ is special of type at most $d$.

When $d>1$ and $(p_{-d},p_{-d+1})$ is constant, we argue similarly,
assuming that $\gamma_{-d+1}$ is non-zero and using
\begin{equation*}
(p_{-d},p_{-d+1})_{z}=
-(\eta_{\del/\del z}p_{-d-1},p_{-d+1})-(p_{-d},\eta_{\del/\del z}p_{-d})
=-(\eta_{\del/\del z}p_{-d-1},p_{-d+1}),
\end{equation*}
to conclude that there is a constant $s$ such that
$\gamma_{-d-1}=s\gamma_{-d+1}$ and then work with $p(t)-st^{-2}p(t)$.
\end{proof}

Let us now restrict attention to codimension $1$ where we can
carry out the recursions of Section~\ref{sec:fcq} explicitly.  So
assume that $(\Lambda,\eta)$ is isothermic in $S^3$ and let $N$ be a
unit (hence $D$-parallel) section of the line bundle $V^{\perp}$.
Let $p(t)$ be a formal conserved quantity with
$\bigl(p(t),p(t)\bigr)=r(t)$ and write $\kappa=kN$,
\begin{equation*}
p_i=\alpha_i\psi+\beta_i\psi_z+\overline{\beta_i}\psi_{\bar{z}}+\gamma_i\psi+\delta_iN,
\end{equation*}
where we have introduced real functions $k$ and $\delta_{i}$.

We take, without loss of generality, $p_0=N$ so that $\gamma_0=0$ and
$\delta_0=1$ whence
\begin{subequations}\label{eq:12}
\begin{align}
\label{eq:8}\gamma_{-1}&=2k\\
\label{eq:9}\delta_{-1}&=r_{-1}/2\\
\label{eq:10}p_{-1}&=4(k_{z\bar{z}}+k^3)\psi-4k_{\bar{z}}\psi_z-4k_z\psi_{\bar{z}}+2k\hat{\psi}
+\tfrac{r_{-1}}{2}N\\
\label{eq:11}\gamma_{-2}&=4k_{zz}+(2c+r_{-1})k.
\end{align}
\end{subequations}
As an immediate consequence of Theorem~\ref{corollary-arbitrary
codimension}, we have:
\begin{proposition}{\cite{BurPedPin02}}\label{th:5}
An isothermic surface has constant mean curvature in a
$3$-dimensional space-form if and only there is a constant $H\in\R$
such that
\begin{equation}
\label{eq:13}
2k_{zz}+ck=Hk.
\end{equation}
\end{proposition}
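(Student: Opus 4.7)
The plan is to specialise Theorem~\ref{corollary-arbitrary codimension} to $d=1$ and combine it with the explicit codimension-one relations \eqref{eq:12}, in particular the formula \eqref{eq:11} for $\gamma_{-2}$.

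By Theorem~\ref{corollary-arbitrary codimension}, $(\Lambda,\eta)$ is special of type $1$ precisely when it admits a formal conserved quantity $p(t)$ with $2\gamma_{-1,zz}+c\gamma_{-1}+2(q_{-1},\kappa)=0$, or equivalently (via~\eqref{eq:5}) with $\gamma_{-2}=0$. In codimension one, the normalisation $p_0=N$ forces $\gamma_{-1}=2k$ by \eqref{eq:8} and $q_{-1}=\tfrac{r_{-1}}{2}N$ by \eqref{eq:9}--\eqref{eq:10}, so \eqref{eq:11} rewrites the condition as
\begin{equation*}
4k_{zz}+(2c+r_{-1})k=0,
\end{equation*}
i.e.\ $2k_{zz}+ck=Hk$ with $H:=-r_{-1}/2$.

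For the forward direction, I simply read off $H$ from $r(t)=\bigl(p(t),p(t)\bigr)$ of the given conserved quantity. For the converse, given $H$ satisfying $2k_{zz}+ck=Hk$, I apply Proposition~\ref{proposition-existence of formal} with any $r(t)$ satisfying $r_0=1$ and $r_{-1}=-2H$ to construct a formal conserved quantity $p(t)$ with $p_0=N$; the codimension-one identities \eqref{eq:12} then force $\gamma_{-2}=0$, so that $(\Lambda,\eta)$ is special of type $1$ by Theorem~\ref{corollary-arbitrary codimension}. The equivalence between being special of type $1$ in codimension one and having constant mean curvature in a $3$-dimensional space-form was recalled in Section~\ref{sec:isoth-spec-isoth}.

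The hard part is conceptual rather than computational: one must confirm that $\gamma_{-1}$ and $q_{-1}$ are determined entirely by $p_0=N$ and $r_{-1}$, and not by any later constants of integration further down the series. This is precisely the codimension-one uniqueness observation used in the proof of Theorem~\ref{theorem-existence of formal}, and once it is in hand the proposition collapses to the one-line algebra above.
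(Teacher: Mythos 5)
Your proposal is correct and follows essentially the same route as the paper, which presents Proposition~\ref{th:5} as an immediate consequence of Theorem~\ref{corollary-arbitrary codimension} with $d=1$ combined with the explicit codimension-one data \eqref{eq:12}, identifying $H=-r_{-1}/2$. Your additional remarks on the converse (invoking Proposition~\ref{proposition-existence of formal} to realise $r_{-1}=-2H$) and on the uniqueness of $\gamma_{-1}$, $q_{-1}$ simply make explicit what the paper leaves implicit.
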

Here, the space-form is $E(p_{-1})$ and $H=-r_{-1}/2$.  In
particular, in this setting $p_{-1}$ and so $(p_{-1},p_{-1})$ is
constant, that is,
\begin{equation*}
16(-k_{z\bar{z}}k-k^4+k_zk_{\bar{z}})+r_{-1}^2/4
\end{equation*}
is constant.  In view of Proposition \ref{th:3}, we deduce the
following result of Musso--Nicolodi:
\begin{proposition}{\cite{MusNic01}}
An isothermic surface has constant mean curvature in a
$3$-dimensional space-form if and only
\begin{equation}
\label{eq:17}
k_{z\bar{z}}k+k^4-k_zk_{\bar{z}}
\end{equation}
is constant.
\end{proposition}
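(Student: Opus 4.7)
The plan is to chain Proposition~\ref{th:5}, Theorem~\ref{corollary-arbitrary codimension}, and Proposition~\ref{th:3}, and then carry out a single explicit inner-product computation using the formula \eqref{eq:10} for $p_{-1}$.

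First, I would note that, with $p_0 = N$ and in view of \eqref{eq:11}, the constant-mean-curvature equation \eqref{eq:13} of Proposition~\ref{th:5} is exactly the statement $\gamma_{-2}=0$, with $H = -r_{-1}/2$. By Theorem~\ref{corollary-arbitrary codimension}, this says $(\Lambda,\eta)$ is special isothermic of type at most $1$, which by Proposition~\ref{th:3} applied at $d=1$ is equivalent to the existence of a formal conserved quantity $p(t)$ with $p_0=N$ for which $\bigl(p_{-1},p_{-1}\bigr)$ is constant.

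Next, I would compute $\bigl(p_{-1},p_{-1}\bigr)$ directly from \eqref{eq:10}. The non-trivial pairings of the frame $\psi,\psi_z,\psi_{\bar z},\hat{\psi},N$ are $(\psi,\hat{\psi})=-1$, $(\psi_z,\psi_{\bar z})=\tfrac12$ and $(N,N)=1$, with all others vanishing: the first is the normalisation defining $\hat{\psi}$, the second follows from $|\D\psi|^2=|\D z|^2$ together with differentiating $(\psi,\psi)=0$ using \eqref{eq:6}, and the third is the choice of unit $N$. Substituting \eqref{eq:10} then yields
\[
\bigl(p_{-1},p_{-1}\bigr) = -16\bigl(kk_{z\bar z}+k^4-k_zk_{\bar z}\bigr) + \tfrac{r_{-1}^2}{4},
\]
already recorded in the paragraph preceding the proposition. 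Since $r_{-1}\in\R$ is a constant, this expression is $x$-independent if and only if \eqref{eq:17} is constant.

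For the converse direction, Proposition~\ref{proposition-existence of formal} supplies, locally away from the zeros of $\eta$, a formal conserved quantity with $p_0=N$ and any prescribed $r(t)$; pick such a $p(t)$ (for instance with $r_{-1}=0$). The displayed formula then shows that constancy of \eqref{eq:17} forces $\bigl(p_{-1},p_{-1}\bigr)$ to be constant, so by Proposition~\ref{th:3} the surface is locally special of type at most $1$, and hence has CMC by Proposition~\ref{th:5}. The only genuine computation is the frame-pairing evaluation of $\bigl(p_{-1},p_{-1}\bigr)$, which is routine once the pairings are tabulated; I anticipate no serious obstacle.
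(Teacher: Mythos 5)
Your proof is correct and follows essentially the same route as the paper, which likewise obtains the result by combining the equivalence of CMC with being special of type $1$ (Proposition~\ref{th:5}), the explicit evaluation $\bigl(p_{-1},p_{-1}\bigr)=-16\bigl(kk_{z\bar z}+k^4-k_zk_{\bar z}\bigr)+r_{-1}^2/4$ from \eqref{eq:10}, and Proposition~\ref{th:3} for the converse. Your frame-pairing computation reproduces exactly the constant recorded in the paragraph preceding the statement, so there is nothing to add.
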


Similarly, Theorem~\ref{corollary-arbitrary codimension} for $d=2$
gives:
\begin{proposition}
\label{th:4}
$(\Lambda,\eta)$ is special isothermic of type $2$ if and only if
there are constants $s_1,s_{2}\in\R$ such that
\begin{equation}
\label{eq:15}
\begin{split}
4k_{zzzz}+4ck_{zz}+4c_zk_z+(2c_{zz}+c^2)k+8(k_{z\bar{z}}k+k^4-k_zk_{\bar{z}})k\\+
s_1(2k_{zz}+ck)+s_2k=0.
\end{split}
\end{equation}
\end{proposition}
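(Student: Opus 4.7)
The plan is to specialise Theorem~\ref{corollary-arbitrary codimension} to $d=2$ in codimension one, writing out the condition $\gamma_{-3}=0$ explicitly with respect to the frame $\psi,\psi_{z},\psi_{\bar{z}},\hat{\psi},N$. By \eqref{eq:5} this amounts to
\[
2\gamma_{-2,zz}+c\gamma_{-2}+2\delta_{-2}k=0,
\]
since $q_{-2}=\delta_{-2}N$ and $\kappa=kN$. The two ingredients are $\gamma_{-2}$, already computed in \eqref{eq:11}, and the scalar $\delta_{-2}$.

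For $\delta_{-2}$ I would apply \eqref{eq:7} with $i=-2$ and $p_{0}=N$: since $(N,q_{-2})=\delta_{-2}$, this yields $2\delta_{-2}=r_{-2}-(p_{-1},p_{-1})$. The inner product $(p_{-1},p_{-1})$ is read off directly from \eqref{eq:10} using the frame pairings forced by $|\D\psi|^{2}=|\D z|^{2}$, namely $(\psi,\hat{\psi})=-1$, $(\psi_{z},\psi_{\bar{z}})=\tfrac{1}{2}$, $(N,N)=1$ and all other pairings zero. A short computation gives
\[
(p_{-1},p_{-1})=-16(k_{z\bar{z}}k+k^{4}-k_{z}k_{\bar{z}})+\tfrac{r_{-1}^{2}}{4},
\]
recovering the Musso--Nicolodi invariant already displayed before Proposition~\ref{th:5}.

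Substituting $\gamma_{-2}=4k_{zz}+(2c+r_{-1})k$ and the above expression for $\delta_{-2}$ into $\gamma_{-3}=0$ and dividing by two turns it into \eqref{eq:15} with
\[
s_{1}=r_{-1}/2,\qquad s_{2}=r_{-2}/2-r_{-1}^{2}/8.
\]
This yields the only-if direction at once. For the converse, given $s_{1},s_{2}\in\R$, set $r_{-1}:=2s_{1}$ and $r_{-2}:=2s_{2}+s_{1}^{2}$, extend arbitrarily to a formal Laurent series $r(t)=1+r_{-1}t^{-1}+r_{-2}t^{-2}+\cdots$ with $r_{0}=1>0$, and invoke Proposition~\ref{proposition-existence of formal} (or, globally in $S^{3}$, Theorem~\ref{theorem-existence of formal}) to produce a formal conserved quantity $p(t)$ with $(p(t),p(t))=r(t)$. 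The hypothesis then forces $\gamma_{-3}=0$ and Theorem~\ref{corollary-arbitrary codimension} closes the argument.

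The main obstacle is purely bookkeeping: aggregating the derivative terms in $2\gamma_{-2,zz}+c\gamma_{-2}$ and in $(p_{-1},p_{-1})$. The computation is routine but error-prone, with the key structural point being that the constant $r_{-1}$ threads through the $z$-derivatives and recombines with the differential expressions in $c$ and $k$ precisely to produce the $s_{1}(2k_{zz}+ck)$ coefficient in \eqref{eq:15}.
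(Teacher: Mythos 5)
Your proposal is correct and follows exactly the route the paper intends (the paper states the result as an immediate consequence of Theorem~\ref{corollary-arbitrary codimension} with $d=2$ together with the explicit recursion \eqref{eq:12} and the normalisation \eqref{eq:7}): the computation of $(p_{-1},p_{-1})$, the identification $2\delta_{-2}=r_{-2}-(p_{-1},p_{-1})$, and the resulting constants $s_1=r_{-1}/2$, $s_2=r_{-2}/2-r_{-1}^2/8$ all check out. The converse via Proposition~\ref{proposition-existence of formal} with $r_{-1}=2s_1$, $r_{-2}=2s_2+s_1^2$ is also exactly right.
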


\section{Surfaces of revolution, cones and cylinders}\label{surfaces of revolution chapter}

We illustrate the preceding theory by applying it to surfaces of
revolution, cones and cylinders in $S^{3}$.  These surfaces are
automatically isothermic surfaces and we will find necessary and
sufficient conditions for them to be special isothermic.

\subsection{Surfaces of revolution and cones}

We take a uniform approach to cones and surfaces of revolution by
viewing them as extrinsic products.  For this, let
$W\leq\mathbb{R}^{4,1}$ be a $3$-dimensional subspace and contemplate
the direct sum
\begin{equation*}
\mathbb{R}^{4,1}=W\oplus W^{\perp}.
\end{equation*}
Let $\Sigma=I_1\times I_2$ be a product of intervals and suppose that
$\Lambda:\Sigma\to S^3=\mathbb{P}(\mathcal{L})$ is of the form
\begin{equation*}
\Lambda:=\langle \phi_{1}+\phi_{2}\rangle
\end{equation*}
where $\phi_1:I_1\to W$ and $\phi_2:I_2\to W^{\perp}$ are curves with
$(\phi_1,\phi_1)=-(\phi_2,\phi_2)=C$, for a non-zero constant
$C\in\R$.

Here is the geometry of the situation: if $W$ has indefinite
signature (so that $C<0$), then $\phi_1$ takes values in a
hyperboloid while $\phi_2$ is circle-valued.  Taking the half-plane
model of the hyperboloid, we see that $\Lambda$ is a surface of
revolution.  Similarly, if $C>0$, $\phi_1$ is a curve on a $2$-sphere
while $\phi_2$ takes values in a half-line so that $\Lambda$ is the
cone over $\phi_1$.

All such surfaces are isothermic: one easily checks that
$\eta=(\D\phi_{1}-\D\phi_{2})\wedge(\phi_{1}+\phi_{2})$ is closed.
The corresponding holomorphic coordinate is also easy to identify:
with $u,v$ the arc-length parameters on $I_1,I_2$ respectively, set
$z=u+iv$.  Then $\psi:=\phi_{1}+\phi_{2}$ is satisfies $|\D
\psi|^{2}=|\D z|^{2}$ and $\eta=-\psi\wedge(\psi_{\bar{z}}\D
z+\psi_{z}\D \bar{z})$.

Set $S=\{x\in W:(x,x)=C\}$ so that $\phi_1:I_1\to S$, let $n$ be a
unit normal to $\phi_1$ in $S$ and $\bk$ the corresponding curvature
so that
\begin{equation*}
\phi_{1}''=\bk n-\frac{1}{C}\phi_{1}.
\end{equation*}
We also have:
\begin{equation*}
\phi_{2}''=\frac{1}{C}\phi_{2}.
\end{equation*}
Using these, we compute:
\begin{subequations}
\label{eq:16}
\begin{align}
\psi_z&=\half(\phi_1'-i\phi_2')\\
N&=n+\frac{\bk}{2}(\phi_1+\phi_2)\\
\hat{\psi}&=\half(\tfrac{\bk^2}{4}-\tfrac{1}{C})\phi_1+
\half(\tfrac{\bk^2}{4}+\tfrac{1}{C})\phi_2+\tfrac{\bk}{2}n
\end{align}
\end{subequations}
with $N$ a unit section of $V^{\perp}$, for $V$ the central sphere
congruence of $\Lambda$.  Then
\begin{equation*}
\psi_{zz}=\frac{1}{4}(\phi_{1}''-\phi_{2}'')=
-\frac{1}{4C}(\phi_{1}+\phi_{2})+\frac{\bk}{4}n=
(-\frac{1}{4C}-\frac{\bk^{2}}{8})\psi+\frac{\bk}{4}N,
\end{equation*}
so that the Schwarzian derivative and Hopf differential of $\Lambda$
are given by
\begin{equation*}
c=\frac{1}{2C}+\frac{\bk^{2}}{4}\qquad\kappa=\frac{\bk}{4}N.
\end{equation*}

In the notation of section~\ref{sec:spec-isoth-surf}, $k=\bk/4$ so
that, in the current setting, Proposition \ref{th:5} reads
\begin{proposition}
$(\Lambda,\eta)$ is special isothermic of type $1$ if and only if,
for some constant $\alpha$,
\begin{equation}\label{degree 1 of s.revolution/cones}
\frac{\bk}{C}+\frac{\bk^{3}}{2}+\bk''+\alpha \bk=0;
\end{equation}
\end{proposition}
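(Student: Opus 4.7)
The plan is to apply Proposition~\ref{th:5} directly to the setting at hand, using the explicit formulas for $k$ and $c$ that have just been derived for surfaces of revolution and cones.

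First, I would recall that in the current codimension-$1$ notation we have $\kappa=kN$ with $k=\bk/4$ and Schwarzian derivative $c=\tfrac{1}{2C}+\tfrac{\bk^{2}}{4}$. By Proposition~\ref{th:5}, $(\Lambda,\eta)$ is special isothermic of type $1$ if and only if there is a constant $H\in\R$ such that
\begin{equation*}
2k_{zz}+ck=Hk.
\end{equation*}
So the task reduces to rewriting this scalar PDE in $z$ as an ODE in the arc-length parameter $u$ of the profile curve $\phi_{1}$.

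The key observation is that $\bk$ depends only on $u$: it is the curvature of $\phi_{1}:I_{1}\to S$ with respect to arc length. Since $z=u+iv$, we have $\partial_{z}=\tfrac{1}{2}(\partial_{u}-i\partial_{v})$ on functions of $u$ alone, and hence $\bk_{z}=\tfrac{1}{2}\bk'$, $\bk_{zz}=\tfrac{1}{4}\bk''$. Substituting $k=\bk/4$ gives $2k_{zz}=\bk''/8$ and $ck=\bigl(\tfrac{1}{2C}+\tfrac{\bk^{2}}{4}\bigr)\tfrac{\bk}{4}=\tfrac{\bk}{8C}+\tfrac{\bk^{3}}{16}$, so the condition $2k_{zz}+ck=Hk$ becomes, after multiplication by $8$,
\begin{equation*}
\bk''+\frac{\bk}{C}+\frac{\bk^{3}}{2}=2H\bk.
\end{equation*}
Setting $\alpha=-2H$ yields \eqref{degree 1 of s.revolution/cones}. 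Conversely, any $\alpha$ arising from \eqref{degree 1 of s.revolution/cones} produces a constant $H=-\alpha/2$ satisfying \eqref{eq:13}, so the two conditions are equivalent.

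There is no real obstacle here; the only thing to be careful about is the factor of $1/2$ coming from $\partial_{z}=\tfrac{1}{2}(\partial_{u}-i\partial_{v})$ when passing between the conformal coordinate derivatives and the arc-length derivatives of the profile curve, and the consistent rescaling by $k=\bk/4$. The statement is thus essentially a translation of Proposition~\ref{th:5} via the formulas \eqref{eq:16}.
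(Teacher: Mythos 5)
Your proof is correct and follows exactly the route the paper intends: the paper derives this proposition by substituting $k=\bk/4$ and $c=\tfrac{1}{2C}+\tfrac{\bk^2}{4}$ into Proposition~\ref{th:5} (equivalently, Theorem~\ref{corollary-arbitrary codimension} with $d=1$), and your bookkeeping of the factor $\partial_z=\tfrac12(\partial_u-i\partial_v)$ and the identification $\alpha=-2H$ is accurate.
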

Note that equation (\ref{degree 1 of s.revolution/cones}) means
exactly that $\phi_{1}$ is an elastic curve in $S$, and then
$\alpha=0$ if and only if $\phi_{1}$ is a free elastic curve.

Similarly, Proposition~\ref{th:4} reads:
\begin{proposition}
$(\Lambda,\eta)$ is special isothermic of type $2$ if and
only if there exist real constants $\alpha$ and $\beta$ such that
\begin{equation*}
\frac{\bk}{C^{2}}+\frac{\bk^{3}}{C}+\frac{3\bk^{5}}{8}+\frac{2\bk''}{C}
+\frac{5}{2}(\bk\bk'^{2}+\bk^{2}\bk'')+\bk^{(\mathrm{iv})}
+\alpha(\frac{\bk}{C}+\frac{\bk^{3}}{2}+\bk'')+{\beta\bk}=0.
\end{equation*}
\end{proposition}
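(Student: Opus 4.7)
The plan is to reduce the assertion directly to Proposition~\ref{th:4} by substituting the data of the surface of revolution/cone into the equation displayed there. From the computation immediately preceding the statement we already have
\begin{equation*}
k=\tfrac{\bk}{4},\qquad c=\tfrac{1}{2C}+\tfrac{\bk^{2}}{4},
\end{equation*}
and a crucial feature of the equivariant setting is that both $\bk$ and $C$ depend only on $u$. Hence, with $z=u+iv$, every $\bar z$-derivative of $k$ or $c$ coincides with the corresponding $z$-derivative, and $\del_z=\del_{\bar z}=\tfrac12\del_u$ on $k$ and $c$. This collapses the mixed-derivative expressions in \eqref{eq:15} to ordinary derivatives of $\bk$ in $u$.

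Thus I would proceed in three bookkeeping steps. First, rewrite each $z$-derivative appearing in \eqref{eq:15} as $\tfrac12\del_u$ acting on the appropriate quantity, obtaining the replacements
\begin{equation*}
k_{z}=k_{\bar z}=\tfrac{\bk'}{8},\quad k_{zz}=k_{z\bar z}=\tfrac{\bk''}{16},\quad k_{zzzz}=\tfrac{\bk^{(\mathrm{iv})}}{64},\quad c_{z}=\tfrac{\bk\bk'}{4},\quad c_{zz}=\tfrac{(\bk')^{2}+\bk\bk''}{8}.
\end{equation*}
Second, substitute these and the expressions for $c$, $k$ into the left-hand side of \eqref{eq:15}, multiply through by $16$, and collect terms grouped by monomial type ($\bk^{(\mathrm{iv})}$, $\bk^{2}\bk''$, $\bk(\bk')^{2}$, $\bk''/C$, $\bk^{5}$, $\bk^{3}/C$, $\bk/C^{2}$). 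Third, identify the resulting polynomial with the three pieces in the displayed equation: the piece independent of $s_{1},s_{2}$ should reproduce
\begin{equation*}
\tfrac{\bk}{C^{2}}+\tfrac{\bk^{3}}{C}+\tfrac{3\bk^{5}}{8}+\tfrac{2\bk''}{C}+\tfrac{5}{2}\bigl(\bk(\bk')^{2}+\bk^{2}\bk''\bigr)+\bk^{(\mathrm{iv})},
\end{equation*}
while the $s_{1}$-piece becomes proportional to $\bk''+\bk/C+\bk^{3}/2$ and the $s_{2}$-piece proportional to $\bk$. Renaming $\alpha=2s_{1}$ and $\beta=4s_{2}$ (and checking that the converse rescaling is equally valid) gives the ``if and only if'' stated in the proposition.

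The only real obstacle is accurate bookkeeping of the fractional coefficients in the second step, since several contributions (notably to $\bk^{2}\bk''$, $\bk(\bk')^{2}$, and $\bk^{5}$) come from several different terms of \eqref{eq:15} and must be added with the correct signs. Once one clears denominators by multiplying by $16$, the match with the target expression is exact, and the equivalence with Proposition~\ref{th:4} is immediate. No new ideas beyond the translation of \eqref{eq:15} into the equivariant variables are required; in particular, the appearance of only two free constants $\alpha,\beta$ reflects exactly the two free constants $s_{1},s_{2}$ in Proposition~\ref{th:4}.
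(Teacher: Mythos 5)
Your proposal is correct and is exactly the intended argument: the paper states this proposition as an immediate translation of Proposition~\ref{th:4} (``Similarly, Proposition~\ref{th:4} reads:''), obtained by substituting $k=\bk/4$, $c=\frac{1}{2C}+\frac{\bk^2}{4}$ and $\del_z=\del_{\bar z}=\tfrac12\del_u$ into \eqref{eq:15} and clearing denominators. Your substitution table and the identifications $\alpha=2s_1$, $\beta=4s_2$ all check out.
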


We now prove that the constant term $p_{-d}$ of a polynomial
conserved quantity of $(\Lambda,\eta)$ lies $W$. For this, we recall
the notations of the previous sections and begin with a lemma:
\begin{lemma}\label{lemma}
Let $p(t)$ be a formal conserved quantity for $(\Lambda,\eta)$.
Then, for each $i\leq 0$, $\gamma_{i}=-(\psi,p_i)$ is independent of $v$.
\begin{proof}
We induct, noting that both $c$ and $k=\bk/4$ are independent of $v$.
\end{proof}
\end{lemma}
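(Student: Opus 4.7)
My plan is to establish the lemma by downward induction on $i\leq 0$. The base case is immediate: by Proposition~\ref{top term}(\ref{item}), $p_0\in\Gamma(V^\perp)$, and since $\hat{\psi}$ is the unique frame vector pairing nontrivially with $\psi$, we have $\gamma_0=-(\psi,p_0)=0$, which is trivially $v$-independent.

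For the inductive step, I would assume each $\gamma_j$ with $0\geq j\geq i$ depends only on $u$ and aim to prove the same for $\gamma_{i-1}$. The natural tool is the recursion \eqref{eq:5}:
\begin{equation*}
\gamma_{i-1}=2\gamma_{i,zz}+c\gamma_i+2(q_i,\kappa).
\end{equation*}
The first two terms are handled by the inductive hypothesis together with the explicit formula $c=\tfrac{1}{2C}+\tfrac{\bk^2}{4}$ for the Schwarzian derivative. All the real content therefore lies in the normal inner product $(q_i,\kappa)$. Codimension one is crucial here: $V^{\perp}$ is framed by the $D$-parallel section $N$, so writing $q_i=\delta_iN$ and $\kappa=kN$ with $k=\bk/4$ gives $(q_i,\kappa)=k\delta_i$, and the problem reduces to showing that the real scalar $\delta_i$ is independent of $v$.

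For this I would appeal to \eqref{eq:4}, which in codimension one collapses to the scalar equation
\begin{equation*}
\delta_{i,z}=2\gamma_{i,\bar z}k-2\gamma_ik_{\bar z}.
\end{equation*}
By the inductive hypothesis on $\gamma_i$ and the geometric fact that $k$ depends only on $u$, the right-hand side is a real-valued function of $u$ alone. Writing $\delta_{i,z}=\tfrac{1}{2}(\delta_{i,u}-i\delta_{i,v})$ and extracting imaginary parts then forces $\delta_{i,v}=0$, which is precisely what is needed to close the induction.

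There is no serious obstacle here beyond careful bookkeeping of which quantities are real and which depend on $v$. The only mild subtlety worth flagging is that \eqref{eq:4} determines $\delta_i$ only up to an additive constant of integration, but any such constant is trivially $v$-independent, so the argument goes through regardless of how the constant is fixed (for instance via \eqref{eq:7}).
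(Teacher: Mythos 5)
Your proposal is correct and is exactly the argument the paper compresses into its one-line proof: downward induction via the recursion \eqref{eq:5}, using that $c$ and $k=\bk/4$ depend only on $u$, with the codimension-one form of \eqref{eq:4}, namely $\delta_{i,z}=2\gamma_{i,\bar z}k-2\gamma_ik_{\bar z}$, forcing $\delta_{i,v}=0$ since the right-hand side is real and $v$-independent. The bookkeeping (base case $\gamma_0=0$ from Proposition~\ref{top term}, reality of $\delta_i$, irrelevance of the constant of integration) is all handled correctly.
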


\begin{proposition}\label{constant terms}The constant terms of the
polynomial conserved quantities of $(\Lambda,\eta)$ lie in $W$.
\end{proposition}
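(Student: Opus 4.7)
The plan is to use Lemma~\ref{lemma} together with the observation that the constant term of a polynomial conserved quantity is literally a constant vector in $\R^{4,1}$, and then to exploit the product structure $\psi=\phi_1+\phi_2$ to force the $W^{\perp}$-component of this vector to vanish.

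More precisely, let $q(t)=\sum_{i=0}^d q_i t^i$ be a polynomial conserved quantity of degree $d$, and set $p(t):=t^{-d}q(t)$. Then $p(t)$ is a formal conserved quantity with $p_{-d}=q_0$ equal to the constant term of $q(t)$. Extracting the $t^0$-coefficient from $(\D+t\eta)q(t)=0$ yields $\D q_0=0$, so $p_{-d}$ is a globally constant element of $\R^{4,1}$. Decompose it as $p_{-d}=w_1+w_2$ with $w_1\in W$ and $w_2\in W^{\perp}$; the goal is to show $w_2=0$. Applying Lemma~\ref{lemma} with $i=-d$, the function $\gamma_{-d}=-(\psi,p_{-d})$ depends only on $u$. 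Using $\psi=\phi_1+\phi_2$ and the orthogonality $W\perp W^{\perp}$, I expand
\begin{equation*}
(\psi,p_{-d})=(\phi_1(u),w_1)+(\phi_2(v),w_2);
\end{equation*}
the first term is already $v$-independent, so $(\phi_2(v),w_2)$ must itself be constant in $v$. Differentiating yields $(\phi_2'(v),w_2)=0$ for every $v\in I_2$.

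It then remains to observe that, as $v$ varies, the vectors $\phi_2'(v)$ span $W^{\perp}$. Indeed, from $(\phi_2,\phi_2)=-C$ constant one has $\phi_2'(v)\perp\phi_2(v)$, so $\phi_2'(v)$ lies in the $1$-dimensional subspace $\langle\phi_2(v)\rangle^{\perp}\cap W^{\perp}$; since $\phi_2$ is a non-constant curve with non-zero constant self-product, the lines $\langle\phi_2(v)\rangle$ cannot all coincide, hence for nearby $v_1\neq v_2$ the vectors $\phi_2'(v_1)$ and $\phi_2'(v_2)$ lie in distinct $1$-dimensional subspaces of the $2$-plane $W^{\perp}$ and are therefore linearly independent. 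Non-degeneracy of $W^{\perp}$ (signature $(2,0)$ in the surface-of-revolution case, $(1,1)$ in the cone case) then forces $w_2=0$, i.e.\ $p_{-d}\in W$.

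The argument is short once Lemma~\ref{lemma} is in hand; the only step requiring any care is the spanning claim for $\phi_2'$, which I view as the main (and rather mild) obstacle, since it is where the geometric hypothesis that $\phi_2$ is a genuine non-linear curve on its pseudo-sphere in $W^{\perp}$ is used.
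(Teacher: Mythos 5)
Your proof is correct, but it reaches the conclusion by a genuinely different route from the paper's. Both arguments begin from Lemma~\ref{lemma} and both produce the identity $(p_{-d},\phi_2')=0$; the difference is in how the remaining $\phi_2$-direction of $W^{\perp}$ is killed. The paper computes $(p_{-d},\phi_2)$ explicitly in the frame \eqref{eq:16}, substitutes \eqref{eq:3} for $\alpha_{-d}$, and recognises the result as $-C\bigl(2\gamma_{-d,zz}+c\gamma_{-d}+2\delta_{-d}\tfrac{\bk}{4}\bigr)$, which vanishes by the type-$d$ characterisation of Theorem~\ref{corollary-arbitrary codimension}. You avoid that frame computation and the ODE characterisation entirely: you use only that the constant term $q_0=p_{-d}$ is a genuinely constant vector of $\R^{4,1}$, so that $v$-independence of $\gamma_{-d}=-(\psi,p_{-d})$ forces $(\phi_2'(v),w_2)=0$ for \emph{every} $v$, and then you let the tangent directions $\phi_2'(v)$ sweep out $W^{\perp}$. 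Your spanning step is sound as written, though it can be compressed: differentiating $(\phi_2'(v),w_2)=0$ once more and using $\phi_2''=\tfrac{1}{C}\phi_2$ gives $(\phi_2,w_2)=0$ directly, and $\phi_2,\phi_2'$ frame $W^{\perp}$ since $(\phi_2,\phi_2)=-C\neq 0$ and $(\phi_2,\phi_2')=0$ with $\phi_2'$ a unit vector. What your approach buys is economy — no appeal to the special-of-type-$d$ equation and no frame algebra; what it gives up is generality — it exploits the constancy of the coefficient $p_{-d}$ and so applies only to the constant term, whereas the paper's intermediate identity $(p(t),\phi_2')=0$ holds for the entire series.
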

\begin{proof}
Suppose that $(\Lambda,\eta)$ is a special isothermic surface of type
$d$ with polynomial conserved quantity $q(t)$ and work with the
formal conserved quantity $p(t)=t^{-d}q(t)$.  Observe that
Lemma~\ref{lemma} gives
\begin{equation*}
0=(p(t),\psi)_v=-t(\eta_{\del/\del v}p(t),\psi)+(p(t),\psi_{v})=(p(t),\phi_2'),
\end{equation*}
since $\eta\psi=0$.  In particular, $(p_{-d},\phi_2')=0$.  Moreover,
from \eqref{eq:16}, we have
\begin{equation*}
(p_{-d},\phi_{2})=
-\alpha_{-d}C+\gamma_{-d}(-\frac{\bk^{2}}{8}C-\frac{1}{2})+
\delta_{-d}(-\frac{1}{2}\bk C).
\end{equation*}
However, in this context, by Lemma~\ref{lemma}, \eqref{eq:3} reads
\begin{equation*}
\alpha_{-d}=\half\gamma_{-d,uu}+\frac{\bk^2}{8}\gamma_{-d}=
2\gamma_{-d,zz}+\frac{\bk^2}{8}\gamma_{-d}
\end{equation*}
so that
\begin{equation*}
(p_{-d},\phi_{2})=-C(2\gamma_{-d,zz}+c\gamma_{-d}+2\delta_{-d}\frac{\bk}{4})=0.
\end{equation*}
Since $\phi_2,\phi_2'$ frame $W^{\perp}$, the result follows.
\end{proof}

For cones, this has a geometric consequence: if $p_{-d}\neq 0$, we
have $(p_{-d},p_{-d})>0$, since $W$ has definite signature, so that
$(\Lambda,\eta)$ is special isothermic in a hyperbolic space form.

\subsection{Cylinders}

A similar analysis may be carried out for cylinders which amount to
the limiting case where the curvature $1/C$ of $S$ is
zero.  We briefly rehearse the details.

Let $v_{0}, v_{\infty}\in\mathcal{L}$ with
$(v_{0},v_{\infty})=-1$, choose a $2$-dimensional subspace
$U\leq\langle v_0,v_{\infty}\rangle^{\perp}$ and write
\begin{equation*}
\langle v_{0},v_{\infty}\rangle^{\perp}=U\oplus U^{\perp}
\end{equation*}

Again let $\Sigma=I_1\times I_2$ be a product of intervals and
suppose that $\Lambda:\Sigma\to S^{3}\cong\mathbb{P}(\mathcal{L})$ is
of the form
\begin{equation*}
\Lambda:=\langle \phi_{1}+\phi_{2}+v_{0}+\frac{1}{2}\big((\phi_{1},\phi_{1})+(\phi_{2},\phi_{2})\big)v_{\infty}\rangle
\end{equation*}
where $\phi_{1}:I_{1}\to U$ and $\phi_{2}:I_{2}\to U^{\perp}$ are
curves.  Let $u,v$ be arc-length parameters on $I_1,I_2$ respectively
and set $z=u+iv$.  Then
$\psi=\phi_{1}+\phi_{2}+v_{0}+\frac{1}{2}\big((\phi_{1},\phi_{1})+(\phi_{2},\phi_{2})\big)v_{\infty}$
has $|\D \psi|^{2}=|\D z|^{2}$ and is isothermic with
\begin{equation*}
\eta=\bigl((\D\phi_1+(\D\phi_1,\phi_1)v_{\infty})-
(\D\phi_2+(\D\phi_2,\phi_2)v_{\infty})\bigr)\wedge\psi.
\end{equation*}

Let $n$ be a unit normal to $\phi_1$ in $U$ with corresponding
curvature $\bk$ so that $\phi_1''=\bk n$.  Using $\phi_2''=0$, we
have
\begin{subequations}
\label{eq:23}
\begin{align}
\label{eq:18}
\psi_z&=\half\bigl((\phi_1'+(\phi_1',\phi_1)v_{\infty})-i
(\phi_2'+(\phi_2',\phi_2)v_{\infty})\bigr)\\
\label{eq:19}N&=n+(n,\phi_{1})v_{\infty}+\frac{\bk}{2}\psi\\
\label{eq:20}\hat{\psi}&=
v_{\infty}+\frac{\bk}{2}(n+(n,\phi_1)v_{\infty})+\frac{\bk^2}{8}\psi
\end{align}
\end{subequations}
and then
\begin{equation*}
\psi_{zz}=\frac{\bk}{4}(n+(n,\phi_1)v_{\infty})=-\frac{\bk^2}{8}\psi+\frac{\bk}{4}N
\end{equation*}
so that
\begin{equation*}
c=\frac{\bk^2}{4}\qquad \kappa=\frac{\bk}{4}N.
\end{equation*}
We conclude:
\begin{proposition}
A cylinder $(\Lambda,\eta)$ is special isothermic of type $1$ if and only if,
for some constant $\alpha$,
\begin{equation*}
\frac{\bk^{3}}{2}+\bk''+\alpha \bk=0;
\end{equation*}
\end{proposition}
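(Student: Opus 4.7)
The plan is to specialise Proposition~\ref{th:5} to the cylinder setup, reusing the same computation that yielded the analogous statement for surfaces of revolution and cones: indeed the cylinder equation is exactly the $1/C\to 0$ limit of equation~(\ref{degree 1 of s.revolution/cones}), which is not a coincidence since the cylinder formulas \eqref{eq:23} are the $1/C\to 0$ limits of \eqref{eq:16}.

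First, I would read off from the computation immediately preceding the proposition that $c=\bk^{2}/4$ and $\kappa=(\bk/4)N$, so in the notation of Section~\ref{sec:spec-isoth-surf} we have $k=\bk/4$. Since $\phi_{1}:I_{1}\to U$ is a plane curve parametrised by arc-length $u$, its curvature $\bk$ depends only on $u$; hence $\bk_{v}=0$ and the Wirtinger derivatives reduce to $\bk_{z}=\bk_{\bar z}=\bk'/2$, $\bk_{zz}=\bk''/4$. Substituting into the criterion $2k_{zz}+ck=Hk$ of Proposition~\ref{th:5} gives
\begin{equation*}
2\cdot\frac{\bk''}{16}+\frac{\bk^{2}}{4}\cdot\frac{\bk}{4}=H\cdot\frac{\bk}{4},
\end{equation*}
which after clearing denominators is equivalent to
\begin{equation*}
\bk''+\frac{\bk^{3}}{2}-2H\bk=0.
\end{equation*}

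Setting $\alpha=-2H$ yields the stated equation; conversely, any constant $\alpha$ in the stated equation produces a constant $H$ satisfying the Proposition~\ref{th:5} criterion, so $(\Lambda,\eta)$ is special isothermic of type $1$. There is no real obstacle: the only thing to check carefully is the reduction of the complex derivatives to real derivatives in $u$, which rests on the manifest $v$-independence of $\bk$, and the bookkeeping of the constants $1/4$ introduced by $k=\bk/4$ and $c=\bk^{2}/4$.
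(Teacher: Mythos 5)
Your proof is correct and takes essentially the same route as the paper, which obtains this proposition by substituting $k=\bk/4$ and $c=\bk^{2}/4$ into the criterion $2k_{zz}+ck=Hk$ of Proposition~\ref{th:5} (the paper leaves the substitution implicit, writing only ``We conclude''). Your derivative bookkeeping $\bk_{zz}=\bk''/4$, justified by the $v$-independence of $\bk$, and the identification $\alpha=-2H$ are exactly right.
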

\begin{proposition}
A cylinder $(\Lambda,\eta)$ is special isothermic of type $2$ if and
only if there exist real constants $\alpha$ and $\beta$ such that
\begin{equation*}
\frac{3\bk^{5}}{8}+
+\frac{5}{2}(\bk\bk'^{2}+\bk^{2}\bk'')+\bk^{(\mathrm{iv})}
+\alpha(\frac{\bk^{3}}{2}+\bk'')+{\beta\bk}=0.
\end{equation*}
\end{proposition}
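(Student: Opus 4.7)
The plan is to reduce the statement directly to Proposition~\ref{th:4}, exactly as was done for surfaces of revolution and cones. Since a cylinder is an immersion in $S^3$, we are in codimension $1$, so Proposition~\ref{th:4} gives a necessary and sufficient criterion in terms of $c$, $k$, and their derivatives in $z,\bar z$. The remaining task is to rewrite that criterion using the explicit data already computed for cylinders, namely $c=\bk^2/4$ and $\kappa=(\bk/4)N$, i.e.\ $k=\bk/4$.

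First I would record that, because the profile curve $\phi_1$ is parametrised by arc-length $u$ and the generator $\phi_2$ is parametrised by $v$, the curvature $\bk$ depends only on $u$. Consequently $\partial_z\bk=\partial_{\bar z}\bk=\tfrac12\bk'$, and more generally each mixed derivative $\partial_z^a\partial_{\bar z}^b\bk$ equals $2^{-(a+b)}\bk^{(a+b)}$. Using this, I would expand each term appearing in the equation of Proposition~\ref{th:4}:
\begin{equation*}
4k_{zzzz}+4ck_{zz}+4c_zk_z+(2c_{zz}+c^2)k+8(k_{z\bar z}k+k^4-k_zk_{\bar z})k+s_1(2k_{zz}+ck)+s_2k=0,
\end{equation*}
substituting $k=\bk/4$ and $c=\bk^2/4$, with derivatives of $c$ computed from $c_z=\tfrac14\bk\bk'$, $c_{zz}=\tfrac18((\bk')^2+\bk\bk'')$.

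Next I would collect like powers of $\bk$ and its derivatives. Grouping by monomial type one obtains $\bk^{(\mathrm{iv})}$, $\bk^2\bk''$, $\bk(\bk')^2$, $\bk^5$, and the lower-order terms $\bk'',\bk^3,\bk$ weighted by $s_1$ and $s_2$. After clearing the common factor $1/16$, the coefficients of $\bk^2\bk''$ and $\bk(\bk')^2$ both come out to $5/2$, the coefficient of $\bk^{(\mathrm{iv})}$ is $1$, and the coefficient of $\bk^5$ is $3/8$, giving exactly the stated equation once one sets $\alpha=2s_1$ and $\beta=4s_2$. Conversely, any pair of constants $\alpha,\beta$ in the stated equation produces corresponding $s_1,s_2$ satisfying the hypothesis of Proposition~\ref{th:4}.

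The main obstacle is purely bookkeeping: it is the careful tracking of factors of $2$ produced by the conversion between $u$-derivatives and $z,\bar z$-derivatives, together with the combinatorial collection of coefficients for the four nonlinear monomials. There is no new geometric input beyond Proposition~\ref{th:4} and the frame formulae \eqref{eq:23}, mirroring word-for-word the derivation given for cones and surfaces of revolution in the $C\to\infty$ limit where the terms involving $1/C$ and $1/C^2$ drop out.
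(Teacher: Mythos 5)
Your proposal is correct and follows exactly the route the paper intends: substitute $k=\bk/4$, $c=\bk^2/4$ (with $\bk$ a function of $u$ alone, so $\partial_z^a\partial_{\bar z}^b\bk=2^{-(a+b)}\bk^{(a+b)}$) into Proposition~\ref{th:4} and collect terms; the coefficients $1$, $\tfrac52$, $\tfrac52$, $\tfrac38$ and the identifications $\alpha=2s_1$, $\beta=4s_2$ all check out. This is precisely the paper's (unwritten) argument, i.e.\ the cone/revolution computation with the $1/C$ terms deleted.
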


The analogue of Lemma~\ref{lemma} holds with the same argument:
\begin{lemma}\label{th:6}
Let $p(t)$ be a formal conserved quantity for $(\Lambda,\eta)$.
Then, for each $i\leq 0$, $\gamma_{i}=-(\psi,p_i)$ is independent of $v$.
\end{lemma}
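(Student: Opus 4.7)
The plan is to mirror the inductive proof of Lemma~\ref{lemma}: I will induct on $-i\geq 0$, using the recursion \eqref{eq:5} to express each new $\gamma_{i-1}$ in terms of already-constructed data, and observing that, in the cylinder set-up, every geometric quantity entering the recursion depends only on $u$. The base case is $\gamma_0=0$, $\delta_0=1$, taking $p_0=N$.

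The key computational input is \eqref{eq:18}--\eqref{eq:20}, which give $c=\bk^2/4$ and $\kappa=(\bk/4)N$. Since $\phi_1$ is parametrised by arc-length $u$, its curvature $\bk=\bk(u)$ depends on $u$ alone, and hence so do $c$ and $k:=\bk/4$. Because $\Lambda$ has codimension one, I write $q_i=\delta_i N$ and strengthen the inductive hypothesis to the assertion that both $\gamma_i$ and $\delta_i$ are independent of $v$; relations \eqref{eq:2}--\eqref{eq:3} then force $\alpha_i$ and $\beta_i$ to be independent of $v$ as well. For the step from $i$ to $i-1$, the recursion \eqref{eq:5} reads $\gamma_{i-1}=2\gamma_{i,zz}+c\gamma_i+2k\delta_i$, which is manifestly a function of $u$ alone. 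Next, \eqref{eq:4} becomes $\delta_{i-1,z}=2\gamma_{i-1,\bar z}k-2\gamma_{i-1}k_{\bar z}$; the right-hand side is real, so taking imaginary parts yields $\delta_{i-1,v}=0$, whence $\delta_{i-1}$ depends on $u$ alone.

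The only subtlety is the constant of integration involved in solving for $\delta_{i-1}$, which represents the freedom to add a parallel section of $V^{\perp}$ to $q_{i-1}$. This freedom is harmless: any real constant is trivially independent of $v$, so choosing it arbitrarily preserves the inductive property. Alternatively, one can fix the constant using the normalisation \eqref{eq:7} provided by Proposition~\ref{top term}, which expresses $\delta_{i-1}$ in terms of the constant coefficients of $r(t)$ and the previously determined $p_j$, all of which are already independent of $v$ by hypothesis.
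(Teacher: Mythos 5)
Your proof is correct and takes essentially the same route as the paper, which disposes of this lemma by noting that the analogue of Lemma~\ref{lemma} ``holds with the same argument'', namely induction on $-i$ using that $c=\bk^2/4$ and $k=\bk/4$ depend on $u$ alone together with the recursion \eqref{eq:1}; you have simply written out that induction in full. The only superfluous point is your discussion of constants of integration: since $p(t)$ is a \emph{given} formal conserved quantity, Proposition~\ref{necessary} already guarantees that $\delta_{i-1}$ satisfies $\delta_{i-1,z}=2\gamma_{i-1,\bar{z}}k-2\gamma_{i-1}k_{\bar{z}}$, whose real right-hand side forces $\delta_{i-1,v}=0$ outright, so there is nothing to choose.
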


\begin{proposition}\label{constant terms-cylinders}The constant terms
of the polynomial conserved quantities of $(\Lambda,\eta)$ lie in
$U\oplus\langle v_{\infty}\rangle$.
\end{proposition}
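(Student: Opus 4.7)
My plan follows the structure of the proof of Proposition~\ref{constant terms} for the revolution/cone case. Starting from a polynomial conserved quantity $q(t)$ of degree $d$, I form the formal conserved quantity $p(t)=t^{-d}q(t)$; then $p_{-d}=q_0$ is the constant term under investigation and $p_i=0$ for all $i<-d$. Since the orthogonal complement of $U\oplus\langle v_{\infty}\rangle$ in $\R^{4,1}$ is the $2$-plane $\langle v_{\infty}\rangle\oplus U^{\perp}$, which is spanned by $v_{\infty}$ together with the constant unit vector $\phi_2'\in U^{\perp}$, it suffices to establish the two scalar identities $(p_{-d},v_{\infty})=0$ and $(p_{-d},\phi_2')=0$.

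For the first identity I would expand $p_{-d}$ in the frame $\psi,\psi_z,\psi_{\bar z},\hat\psi,N$ and read off the pairings with $v_{\infty}$ from \eqref{eq:23}, obtaining in particular $(\psi,v_{\infty})=-1$, $(\psi_z,v_{\infty})=0=(\psi_{\bar z},v_{\infty})$, $(\hat\psi,v_{\infty})=-\bk^{2}/8$ and $(N,v_{\infty})=-\bk/2$. After substituting $\alpha_{-d}=2\gamma_{-d,z\bar z}+(\bk^{2}/8)\gamma_{-d}$ from \eqref{eq:3}, a short computation yields
\begin{equation*}
(p_{-d},v_{\infty})=-\bigl(2\gamma_{-d,z\bar z}+\tfrac{\bk^{2}}{4}\gamma_{-d}+\tfrac{\bk}{2}\delta_{-d}\bigr).
\end{equation*}
By Lemma~\ref{th:6}, $\gamma_{-d}$ is independent of $v$, so $\gamma_{-d,zz}=\gamma_{-d,z\bar z}$; the right-hand side then becomes exactly $-\gamma_{-d-1}$ by the recursion \eqref{eq:5} at $i=-d$ (using $c=\bk^{2}/4$ and $(q_{-d},\kappa)=\delta_{-d}\bk/4$). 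Since $p_{-d-1}=0$ forces $\gamma_{-d-1}=0$, the identity follows.

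For the second identity I would exploit Lemma~\ref{th:6} globally: $(\psi,p(t))=-\sum\gamma_{i}t^{i}$ is $v$-independent, so differentiation combined with the $\nabla^{t}$-parallelism $\del_{v}p(t)=-t\eta_{\del/\del v}p(t)$ gives $0=(\psi_{v},p(t))-t(\psi,\eta_{\del/\del v}p(t))$. Because $\eta_{\del/\del v}=i\psi\wedge(\psi_{z}-\psi_{\bar z})$ and the null relations $(\psi,\psi)=(\psi,\psi_{z})=(\psi,\psi_{\bar z})=0$ hold, the correction term vanishes identically and $(\psi_{v},p(t))\equiv 0$. Formula \eqref{eq:18} gives $\psi_{v}=\phi_2'+(\phi_2',\phi_2)v_{\infty}$, so extracting the coefficient of $t^{-d}$ yields $(\phi_2',p_{-d})+(\phi_2',\phi_2)(v_{\infty},p_{-d})=0$; combined with the first identity, this concludes $(\phi_2',p_{-d})=0$.

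The principal point to get right is matching the frame-theoretic expression for $(p_{-d},v_{\infty})$ against the recursion value $\gamma_{-d-1}$: this coincidence relies crucially on Lemma~\ref{th:6}, which collapses $\gamma_{-d,zz}$ onto $\gamma_{-d,z\bar z}$. Beyond that, the argument is essentially bookkeeping in the cylinder frame \eqref{eq:23} together with the short null-vector computation for $(\psi,\eta_{\del/\del v}p(t))$.
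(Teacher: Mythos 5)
Your proof is correct and follows essentially the same route as the paper's: both arguments reduce the claim to the two pairings $(p_{-d},v_{\infty})=0$ and $(p_{-d},\phi_2')=0$, obtain the first from the frame expansion together with \eqref{eq:3}, Lemma~\ref{th:6} and the vanishing of $\gamma_{-d-1}$ via the recursion \eqref{eq:5}, and the second from differentiating $(p(t),\psi)$ in $v$. The only (harmless) difference is that you make explicit the identification of $-(p_{-d},v_{\infty})$ with $\gamma_{-d-1}$, which the paper leaves implicit.
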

\begin{proof}
Suppose that $(\Lambda,\eta)$ is a special isothermic surface of type
$d$ with polynomial conserved quantity $q(t)$ and work with the
formal conserved quantity $p(t)=t^{-d}q(t)$.  From
Lemma~\ref{th:6}, we have
\begin{equation}\label{eq:21}
0=(p(t),\psi)_v=-t(\eta_{\del/\del v}p(t),\psi)+(p(t),\psi_{v})=
(p(t),\phi_2')+(\phi_2',\phi_{2})(p(t),v_{\infty}).
\end{equation}
Now $(\psi,v_{\infty})=-1$ so that $(\D\psi,v_{\infty})=0$ while
\eqref{eq:20} yields $(\hat{\psi},v_{\infty})=-\bk^2/8$.  Thus,
\begin{equation*}
(p_{-d},v_{\infty})=
-\alpha_{-d}-\gamma_{-d}\frac{\bk^{2}}{8}-\delta_{-d}\frac{\bk}{2}.
\end{equation*}
Once more we have
\begin{equation}\label{eq:22}
\alpha_{-d}=\half\gamma_{-d,uu}+\frac{\bk^2}{8}\gamma_{-d}=
2\gamma_{-d,zz}+\frac{\bk^2}{8}\gamma_{-d}
\end{equation}
so that
\begin{equation*}
(p_{-d},v_{\infty})=
-2\gamma_{-d,zz}-\gamma_{-d}\frac{\bk^{2}}{4}-\delta_{-d}\frac{\bk}{2}=0.
\end{equation*}
Now \eqref{eq:21} yields $(p_{-d},\phi_2')=0$ and, since
$v_{\infty},\phi_2'$ frame $(U\oplus\langle
v_{\infty}\rangle)^{\perp}$, we are done.
\end{proof}

Again this has a geometric consequence: if $p_{-d}\neq 0$,
$(p_{-d},p_{-d})\geq 0$ with equality if and only if $p_{-d}\in\langle
v_{\infty}\rangle$.  Thus a special isothermic cylinder is either
special isothermic in a hyperbolic space or in the particular
Euclidean space $E(v_{\infty})$.

In this last case, the conditions to be special isothermic of type
$d$ simplify considerably:
\begin{proposition}
Let $d\in\mathbb{N}_{0}$. A cylinder $(\Lambda,\eta)$ is a special
isothermic surface of type $d$ in $E(v_{\infty})$ if and only if there
exists a formal conserved quantity $p(t)$ of $(\Lambda,\eta)$ such
that $\gamma_{-d}$ is constant.
\end{proposition}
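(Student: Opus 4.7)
The plan is to prove each direction by unpacking the recursion \eqref{eq:1} with the cylinder frame data \eqref{eq:23}. The easy direction $(\Leftarrow)$ is immediate: if $q(t)=\sum_{i=0}^d q_it^i$ is a polynomial conserved quantity with $q_0=\lambda v_\infty$ for some $\lambda\in\R$, then $p(t):=t^{-d}q(t)$ is a formal conserved quantity with $p_{-d}=\lambda v_\infty$, and since $(\psi,v_\infty)=-1$, the coefficient $\gamma_{-d}=-(\psi,p_{-d})=\lambda$ is constant.

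For the harder direction $(\Rightarrow)$, assume we are given a formal conserved quantity $p(t)$ with $\gamma_{-d}$ constant. The strategy is to replace $p(t)$ by $\tilde p(t):=(1-Ct^{-d})p(t)$ for a suitable real constant $C$, which is again a formal conserved quantity. The aim is to arrange $\tilde\gamma_{-d-1}=0$ while keeping $\tilde\gamma_{-d}$ constant; then Theorem~\ref{corollary-arbitrary codimension} together with the truncation argument preceding it produces a polynomial conserved quantity of degree $d$ whose constant term equals $\tilde p_{-d}$, and it only remains to verify that $\tilde p_{-d}\in\langle v_\infty\rangle$.

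Concretely, constancy of $\gamma_{-d}$ forces $\beta_{-d}=0$ from \eqref{eq:2} and $\alpha_{-d}=\gamma_{-d}\bk^2/8$ from \eqref{eq:3}. Writing $q_{-d}=\delta_{-d}N$, and using $\kappa=(\bk/4)N$ together with the fact that $\bk$ depends only on $u$, equation \eqref{eq:4} reduces to $\delta_{-d,z}=-\tfrac{1}{2}\gamma_{-d}\bk_{\bar z}$, which integrates to $\delta_{-d}=-\gamma_{-d}\bk/2+C$ for a single real constant $C$. Feeding this into \eqref{eq:5} yields $\gamma_{-d-1}=C\bk/2$. Since $\gamma_{-1}=\bk/2$ by \eqref{eq:8}, the modification $\tilde p=(1-Ct^{-d})p$ has $\tilde\gamma_{-d}=\gamma_{-d}$ still constant and $\tilde\gamma_{-d-1}=\gamma_{-d-1}-C\gamma_{-1}=0$. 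It also updates $\delta_{-d}$ to $\tilde\delta_{-d}=-\gamma_{-d}\bk/2$ while leaving $\tilde\alpha_{-d}$, $\tilde\beta_{-d}$, $\tilde\gamma_{-d}$ unchanged; substituting these values and the expressions \eqref{eq:19}, \eqref{eq:20} for $N$ and $\hat\psi$ into $\tilde p_{-d}=\tilde\alpha_{-d}\psi+\tilde\gamma_{-d}\hat\psi+\tilde\delta_{-d}N$, the contributions along $\psi$, $n$, and $(n,\phi_1)v_\infty$ cancel and one is left with $\tilde p_{-d}=\gamma_{-d}v_\infty\in\langle v_\infty\rangle$.

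The main obstacle is identifying the correct scalar modification: constancy of $\gamma_{-d}$ alone forces $\delta_{-d}$ to contain exactly one real constant of integration $C$, and the factor $1-Ct^{-d}$ is essentially forced, since it is the unique scalar factor killing $C$ both in $\tilde\delta_{-d}$ and in $\tilde\gamma_{-d-1}$ simultaneously, which works precisely because $\gamma_{-1}=\bk/2$ matches the way $C$ appears in $\gamma_{-d-1}=C\bk/2$.
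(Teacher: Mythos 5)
Your proof is correct and follows essentially the same route as the paper: both directions rest on the same computation, namely that constancy of $\gamma_{-d}$ forces $\delta_{-d}=-\gamma_{-d}\bk/2$ up to a real constant of integration, whence $\tilde p_{-d}=\gamma_{-d}v_{\infty}$ and $\gamma_{-d-1}=0$. Your explicit $(1-Ct^{-d})$ modification merely makes precise the paper's phrase ``we may take $\delta_{-d}=-\gamma_{-d}\bk/2$'' (and tacitly uses the harmless normalisation $p_0=N$ needed for $\gamma_{-1}=\bk/2$).
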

\begin{proof}
Certainly, if we have $p(t)$ with $p_{-d}$ lying in $\langle
v_{\infty}\rangle$ then $0=(p_{-d},\psi_z)=-2\gamma_{-d,z}$ so
that $\gamma_{-d}$ is constant.

For the converse, suppose that $\gamma_{-d}$ is constant and note
that, in the present context, \eqref{eq:4} yields
\begin{equation*}
\delta_{-d,z}=-\gamma_{-d}\bk'/4
\end{equation*}
so that we may take $\delta_{-d}=-\gamma_{-d}\bk/2$.  On the other
hand, by \eqref{eq:20} and \eqref{eq:22}, we have
\begin{equation*}
p_{-d}=2\gamma_{-d,zz}\psi-2\gamma_{-d,\bar{z}}\psi_z-2\gamma_{-d,z}\psi_{\bar{z}}+(\delta_{-d}+\frac{\bk}{2}\gamma_{-d})N+\gamma_{-d}v_{\infty}
\end{equation*}
and all coefficients except the last vanish so we are done.
\end{proof}
In particular, $(\Lambda,\eta)$ is a special isothermic
surface of type $1$ in $E(v_{\infty})$ if
and only if
\begin{equation*}
\bk \mbox{ is constant};
\end{equation*}
$(\Lambda,\eta)$ is a special isothermic surface of type $2$ in
$E(v_{\infty})$ if and only if there exists a real constant $\alpha$
such that
\begin{equation}
\frac{\bk^{3}}{2}+\bk''+\alpha \bk\mbox{ is constant};
\end{equation}
$(\Lambda,\eta)$ is a special isothermic surface of type $3$ in
$E(v_{\infty})$ if and only if there exist real constants $\alpha$
and $\beta$ such that
\begin{equation*}
\frac{3\bk^{5}}{8}+
+\frac{5}{2}(\bk\bk'^{2}+\bk^{2}\bk'')+\bk^{(\mathrm{iv})}
+\alpha(\frac{\bk^{3}}{2}+\bk'')+{\beta\bk} \mbox{ is constant}.
\end{equation*}

\begin{bibdiv}
\begin{biblist}

\bib{Bia05}{article}{
      author={Bianchi, L.},
       title={Ricerche sulle superficie isoterme e sulla deformazione delle
  quadriche},
        date={1905},
     journal={Ann. di Mat.},
      volume={11},
       pages={93\ndash 157},
}

\bib{Bur06}{incollection}{
      author={Burstall, F.~E.},
       title={Isothermic surfaces: conformal geometry, {C}lifford algebras and
  integrable systems},
        date={2006},
   booktitle={Integrable systems, geometry, and topology},
      series={AMS/IP Stud. Adv. Math.},
      volume={36},
   publisher={Amer. Math. Soc.},
     address={Providence, RI},
       pages={1\ndash 82},
      review={\MR{MR2222512 (2008b:53006)}},
}

\bib{BurFerPedPin93}{article}{
      author={Burstall, F.~E.},
      author={Ferus, D.},
      author={Pedit, F.},
      author={Pinkall, U.},
       title={Harmonic tori in symmetric spaces and commuting {H}amiltonian
  systems on loop algebras},
        date={1993},
        ISSN={0003-486X},
     journal={Ann. of Math. (2)},
      volume={138},
      number={1},
       pages={173\ndash 212},
         url={http://dx.doi.org/10.2307/2946637},
      review={\MR{MR1230929 (94m:58057)}},
}

\bib{BurSan12}{article}{
      author={Burstall, F.~E.},
      author={Santos, S.~D.},
       title={Special isothermic surfaces of type {$d$}},
        date={2012},
        ISSN={0024-6107},
     journal={J. Lond. Math. Soc. (2)},
      volume={85},
      number={2},
       pages={571\ndash 591},
         url={http://dx.doi.org/10.1112/jlms/jdr050},
      review={\MR{2901079}},
}

\bib{BurCal}{unpublished}{
      author={Burstall, F.E.},
      author={Calderbank, D.},
       title={Conformal submanifold geometry IV--V},
        note={In preparation},
}

\bib{BurPedPin02}{incollection}{
      author={Burstall, Francis},
      author={Pedit, Franz},
      author={Pinkall, Ulrich},
       title={Schwarzian derivatives and flows of surfaces},
        date={2002},
   booktitle={Differential geometry and integrable systems ({T}okyo, 2000)},
      series={Contemp. Math.},
      volume={308},
   publisher={Amer. Math. Soc.},
     address={Providence, RI},
       pages={39\ndash 61},
      review={\MR{MR1955628 (2004f:53010)}},
}

\bib{BurDonPedPin11}{article}{
      author={Burstall, Francis~E.},
      author={Donaldson, Neil~M.},
      author={Pedit, Franz},
      author={Pinkall, Ulrich},
       title={Isothermic submanifolds of symmetric {$R$}-spaces},
        date={2011},
        ISSN={0075-4102},
     journal={J. Reine Angew. Math.},
      volume={660},
       pages={191\ndash 243},
         url={http://dx.doi.org/10.1515/crelle.2011.075},
      review={\MR{2855825}},
}

\bib{Cal03}{article}{
      author={Calapso, P.},
       title={Sulle superficie a linee di curvatura isoterme},
        date={1903},
     journal={Rendiconti Circolo Matematico di Palermo},
      volume={17},
       pages={275\ndash 286},
}

\bib{Cal15}{article}{
      author={Calapso, P.},
       title={Sulle trasformazioni delle superficie isoterme},
        date={1915},
     journal={Ann. di Mat.},
      volume={24},
       pages={11\ndash 48},
}

\bib{CieGolSym95}{article}{
      author={Cie{\'s}li{\'n}ski, Jan},
      author={Goldstein, Piotr},
      author={Sym, Antoni},
       title={Isothermic surfaces in {$\mathbf{E}^3$} as soliton surfaces},
        date={1995},
        ISSN={0375-9601},
     journal={Phys. Lett. A},
      volume={205},
      number={1},
       pages={37\ndash 43},
         url={http://dx.doi.org/10.1016/0375-9601(95)00504-V},
      review={\MR{MR1352426 (96g:53005)}},
}

\bib{Dar99}{article}{
      author={Darboux, G.},
       title={Sur une classe de surfaces isothermiques li\'ees \`a la
  d\'eformations des surfaces du second degr\'e},
        date={1899},
     journal={C.R. Acad. Sci. Paris},
      volume={128},
       pages={1483\ndash 1487},
}

\bib{Dar99b}{article}{
      author={Darboux, Gaston},
       title={Sur les surfaces isothermiques},
        date={1899},
        ISSN={0012-9593},
     journal={Ann. Sci. \'Ecole Norm. Sup. (3)},
      volume={16},
       pages={491\ndash 508},
         url={http://www.numdam.org/item?id=ASENS_1899_3_16__491_0},
      review={\MR{MR1508975}},
}

\bib{Her03}{book}{
      author={Hertrich-Jeromin, Udo},
       title={Introduction to {M}\"obius differential geometry},
      series={London Mathematical Society Lecture Note Series},
   publisher={Cambridge University Press},
     address={Cambridge},
        date={2003},
      volume={300},
        ISBN={0-521-53569-7},
      review={\MR{MR2004958 (2004g:53001)}},
}

\bib{MusNic01}{incollection}{
      author={Musso, Emilio},
      author={Nicolodi, Lorenzo},
       title={Special isothermic surfaces and solitons},
        date={2001},
   booktitle={Global differential geometry: the mathematical legacy of {A}lfred
  {G}ray ({B}ilbao, 2000)},
      series={Contemp. Math.},
      volume={288},
   publisher={Amer. Math. Soc.},
     address={Providence, RI},
       pages={129\ndash 148},
      review={\MR{MR1871005 (2003a:53017)}},
}

\bib{San08}{thesis}{
      author={Santos, S.D.},
       title={Special isothermic surfaces},
        type={Ph.D. Thesis, University of Bath},
        date={2008},
}

\bib{Sch01}{article}{
      author={Schief, W.~K.},
       title={Isothermic surfaces in spaces of arbitrary dimension:
  integrability, discretization, and {B}\"acklund transformations---a discrete
  {C}alapso equation},
        date={2001},
        ISSN={0022-2526},
     journal={Stud. Appl. Math.},
      volume={106},
      number={1},
       pages={85\ndash 137},
         url={http://dx.doi.org/10.1111/1467-9590.00162},
      review={\MR{MR1805487 (2002k:37140)}},
}

\end{biblist}
\end{bibdiv}

\end{document}